\def\FF{{\mathbb F}}
\def\squareforqed{\hbox{\rlap{$\sqcap$}$\sqcup$}}
\def\qed{\ifmmode\squareforqed\else{\unskip\nobreak\hfil
\penalty50\hskip1em\null\nobreak\hfil\squareforqed
\parfillskip=0pt\finalhyphendemerits=0\endgraf}\fi\medskip}
\newcommand{\ombar}{\overline{\omega}}
\newcommand{\SL}{\mathrm{SL}}
\newcommand{\LL}{\mathrm{L}}
\newcommand{\PGL}{\mathrm{PGL}}
\newcommand{\PSL}{\mathrm{PSL}}
\newcommand{\U}{\mathrm{U}}
\newcommand{\udot}{{}^{\textstyle .}}
\newcommand{\diag}{\mathrm{diag}}
\newtheorem{theorem}{Theorem}
\newtheorem{proposition}{Proposition}
\newtheorem{lemma}{Lemma}
\newtheorem{remark}{Remark}
\title{On the compact real forms of the Lie algebras of type $E_6$ and $F_4$}
\date{
19th August 2012}
\author{Robert A. Wilson}
\address{School of Mathematical Sciences, Queen Mary University of London,
Mile End Road, London E1 4NS, UK}
\email{R.A.Wilson@qmul.ac.uk}
\subjclass[2010]{17B25,17B45,20G20,20G40}
\begin{document}
\begin{abstract}
We give a construction of the compact real form of the Lie algebra of type $E_6$,
using the finite irreducible subgroup of shape
$3^{3+3}{:}\SL_3(3)$, which is isomorphic to a maximal subgroup of the
orthogonal group $\Omega_7(3)$. 
In particular we show that the algebra is uniquely determined by this subgroup.
Conversely, we prove from first principles that the algebra satisfies the
Jacobi identity, and thus give an elementary proof of existence of a Lie algebra
of type $E_6$.
The compact real form of $F_4$ is exhibited as a subalgebra.
\end{abstract}

\maketitle
\section{Introduction}
\label{intro}
The standard construction of the complex simple Lie algebras using their root systems
(see for example Carter's book \cite{Carter})
yields a so-called Chevalley basis, with respect to which the structure constants
of the algebra are all integers. This basis can therefore be used to define
Lie algebras over any field. In particular, these algebras over the real numbers 
are known as the {\em split real forms} of the simple Lie algebras. The
corresponding Lie groups are not compact.

On the other hand, Lie groups which arise in practical applications often are compact,
and it is desirable to have good constructions of these groups and the corresponding
algebras, both of which are known as the {\em compact real forms}. 
It is well-known that every complex simple Lie algebra has a unique
compact real form, and a suitable basis may be obtained from a Chevalley basis
by replacing each pair $\{e_r,e_{-r}\}$ of root vectors by the pair
$\{e_r+e_{-r}, \sqrt{-1}(e_r-e_{-r})\}$, and replacing  each basis vector
$h_r$ of the Cartan subalgebra by $\sqrt{-1}h_r$ (see for example p. 149 of
\cite{Jacobson}).

The simplest example of all is the algebra of type $A_1$. The split real form is
usually taken with respect to a basis $\{e,f,h\}$ and Lie products
\begin{eqnarray*}
[e,f]&=&h,\cr
 [h,e]&=&2e,\cr 
[h,f]&=&-2f.
\end{eqnarray*}
 Defining 
\begin{eqnarray*}
i&=&\sqrt{-1}h/2,\cr 
j&=&(e-f)/2,\cr
k&=&\sqrt{-1}(e+f)/2,
\end{eqnarray*} 
we easily compute 
$[i,j]=k$, $[j,k]=i$, and $[k,i]=j$, 
so that we obtain the familiar `cross product' on Euclidean $3$-space.
More generally, in the case of the
orthogonal groups, it makes sense to take an orthonormal basis $\{v_1,\ldots,v_n\}$
for the Euclidean space, on which the (compact) orthogonal group acts naturally.
Then the Lie algebra is essentially the exterior square of this module, so has a
basis $\{v_i\wedge v_j=-v_j\wedge v_i\}$, which is more or less equivalent to
the modified Chevalley basis described above. The Lie product is easily described
with respect to this basis by
$$[v_i\wedge j_j,v_j\wedge v_k]=v_i\wedge v_k$$ for distinct $i,j,k$, all other products
of basis vectors being $0$.

In the case of the five exceptional simple Lie algebras, $G_2$, $F_4$, $E_6$, $E_7$
and $E_8$, however, a more complicated change of basis may reveal some more
interesting structure.
There has been quite a lot of work on the compact real forms
by the Russian school (see for example the book by Kostrikin and Tiep \cite{KT}).
But even in this work, really nice constructions are hard to find.
In \cite{compactG2} I gave a construction of the compact real form of $G_2$ using
only the action of the group $2^3\udot \LL_3(2)$. In particular I showed that
this group determines the algebra. It acts by permuting $7$ mutually orthogonal
Cartan subalgebras, and the Lie multiplication is given by a single easy formula
and its images under the group.

Turning now to $E_6$,
it is well-known that the complex Lie group $E_6(\mathbb C)$ has a finite
subgroup of shape $3^{3+3}{:}\SL_3(3)$. Moreover, this subgroup is isomorphic
to the stabilizer of a maximal isotropic subspace (of dimension $3$) in the
finite simple orthgonal group $\Omega_7(3)$. Since it acts irreducibly on the
$78$-dimensional Lie algebra, it preserves a unique (up to scalars) Hermitian
form. Moreover, the representation is real, and if we write it as such
then the form becomes a quadratic form, which is (positive or negative)
definite. Therefore, this form is (again up to
scalars) the Killing form, and it follows that the Killing form is negative
definite, and the given $78$-dimensional real Lie algebra is the compact real
form of $E_6$.

In this paper, I construct this algebra from scratch using nothing more than
the structure of this finite group. In particular, the algebra is uniquely
determined (up to an overall scalar factor) by the group.
It may be hoped that this provides a useful way to calculate within the
compact real form of $E_6$. Moreover, the embedding of $F_4$ in $E_6$ is
reflected in the embedding of $3^3{:}\SL_3(3)$ in $3^{3+3}{:}\SL_3(3)$, and 
therefore we obtain also a simple description of the compact real form of $F_4$.
This is particularly revealing, as it is expressed in terms of a $13$-dimensional space
of quaternions, although of course the Lie product is not (bi-)linear
over quaternions.

There are a few related constructions in the literature, most notably that of
Burichenko \cite{Bur1} (see also Burichenko and Tiep \cite{BurTiep}). 
Our work overlaps with theirs, but goes a bit further: our formulae are a
little more concrete and explicit; we prove the existence of a Lie algebra
of type $E_6$ independently of the Chevalley construction; we generalise
to arbitrary fields of characteristic not $3$; and we express the subalgebra
of type $F_4$ in terms of the Hurwitz ring of integral quaternions.

There is also a very nice construction of the
triple cover $3\udot E_6(\mathbb C)$ in its $27$-dimensional representation
by Griess \cite{Griess}, using a Moufang loop of order $3^4$. This loop has
an automorphism group $3^3{:}\SL_3(3)$, and is analogous to the Moufang loop
of $2^4$ octonions $\{\pm1,\pm i_0,\ldots,\pm i_6\}$ which has automorphism 
group $2^3\udot \SL_3(2)$. Burichenko \cite{Bur2} has a similar
construction of the $27$-dimensional representation of $3\udot E_6(\mathbb C)$, 
and a related $27$-dimensional representation of $3\udot\Omega_7(3)$,
which are also briefly described in \cite[Section 14.1]{KT}.

\section{The group $3^{3+3}{:}\SL_3(3)$}
The easiest way to define the required group 
$3^{3+3}{:}\SL_3(3)$, as an abstract group, is to say that it is
isomorphic to the stabilizer in the simple orthogonal group $\Omega_7(3)$
of a maximal isotropic subspace (of dimension $3$) in the natural module.
However, I shall not be using this description here
(although it was used as input to some computer calculations which led to
the definitions below). Instead I shall proceed directly
to describing the action of this group on $78$-dimensional real Euclidean space.
The disadvantage of this approach, however, is that 
it is not easy to see
that our group has exactly the above structure, at least until a very late stage in the argument.

Recall that $\LL_3(3)$ (which can be thought of as any of $\SL_3(3)$, $\PSL_3(3)$ or
$\PGL_3(3)$, according to preference) 
is a group of automorphisms of the projective plane of order $3$.
This plane consists of $13$ points and $13$ lines, with each line consisting of
four points. The points may be labelled by the elements of the field $\FF_{13}$
of order $13$, in such a way that the lines are
$$\{t,t+1,t+3,t+9\}$$ for each $t\in \FF_{13}$. As a permutation group on these
$13$ points, $\LL_3(3)$ may be generated by the three permutations
\begin{eqnarray*}
a&:&t\mapsto t+1\cr
b&:&t\mapsto 3t\cr
c&=&(3,9)(4,X)(5,6)(7,E)
\end{eqnarray*}
where we write $X=10$, $E=11$, $T=12$ to avoid confusion later.

We take $13$ Euclidean spaces of dimension $6$, labelled $V_0$, \ldots, $V_{T}$
with subscripts in $\FF_{13}$ as before. 
Let $V$ be the orthogonal direct sum of the $V_t$.
Each $6$-space is written as a
$3$-dimensional complex space, 
with 
\begin{eqnarray*}
\omega&=&e^{2\pi i/3} = (-1+\sqrt{-3})/2,\cr
\theta&=&\sqrt{-3}=\omega-\ombar,
\end{eqnarray*}
and Euclidean norm equal to the usual Hermitian norm.
Then the $72$ roots of $E_6$
may be taken as the images, under 
coordinate permutations and multiples of each coordinate
by powers of $\omega$,
of 
\begin{eqnarray*}
\pm(\theta,0,0)&&\mbox{ (18 of these)},\cr
\pm(1,1,1)&&\mbox{ (54 of these)}.
\end{eqnarray*}
For any vector $v\in \mathbb C^3$, we write $v_t$ for the corresponding vector in $V_t$.

We are now ready to describe the actions of some elements on the $78$-space $V$.
First, the element $a$ of $\LL_3(3)$ lifts to an
element of order $13$ (also called $a$)
which maps each $v_t$ to $v_{t+1}$, so that for example 
$(\theta,0,0)_0\mapsto (\theta,0,0)_1$.
Second, the element $b$ maps $v_t$ to $v_{3t}$ and then multiplies by the
diagonal matrix $\diag(\omega,\ombar,\ombar)$, so that for example
$(1,1,1)_2\mapsto (\omega,\ombar,\ombar)_6$.
The action of the element $c$ is harder to describe: let $M_1$, $M_2$, $M_3$ and $M_4$
be the matrices
$$
\frac{\theta}{3}\begin{pmatrix}\omega&1&1\cr 1&\omega&1\cr 1&1&\omega\end{pmatrix},
\frac{\theta}{3}\begin{pmatrix}1&1&1\cr 1&\ombar&\omega\cr 1&\omega&\ombar\end{pmatrix},
\frac{\theta}{3}\begin{pmatrix}\omega&\omega&\omega\cr \ombar&\omega&1\cr \ombar&1&\omega\end{pmatrix},
\frac{\theta}{3}\begin{pmatrix}1&\omega&\omega\cr \ombar&\ombar&\omega\cr
\ombar&\omega&\ombar\end{pmatrix}
$$
respectively. Since the $M_i$ are unitary we have $M_i^{-1}=\overline{M}^\top$, so
$$\overline{M_i^{-1}}=M_i^\top.$$
Then $c$ is defined by
\begin{eqnarray*}
(x,y,z)_0&\mapsto&-(\overline{x},\overline{z},\overline{y})_0\cr
(x,y,z)_1&\mapsto&-({x},{z},{y})_1\cr
(x,y,z)_3&\leftrightarrow&-({x}\omega,{z}\ombar,{y}\ombar)_9\cr
(x,y,z)_4&\leftrightarrow&(\overline{x},\overline{y},\overline{z})_X\cr
(x,y,z)_T&\mapsto&(\overline{x},\overline{y},\overline{z})_T\cr
(x,y,z)_2&\mapsto&(\overline{x},\overline{y},\overline{z})_2 M_1\cr
(x,y,z)_8&\mapsto&(\overline{x},\overline{y},\overline{z})_8 M_2\cr
(x,y,z)_5&\leftrightarrow&(\overline{x},\overline{y},\overline{z})_6 M_3\cr
(x,y,z)_7&\leftrightarrow&(\overline{x},\overline{y},\overline{z})_E M_4
\end{eqnarray*}
For clarity we add also:
\begin{eqnarray*}
(x,y,z)_9&\mapsto& -(x\ombar, z\omega,y\omega)_3\cr
(x,y,z)_X&\mapsto& (\overline{x},\overline{y},\overline{z})_4\cr
(x,y,z)_6&\mapsto&  (\overline{x},\overline{y},\overline{z})_5M_3^\top\cr
(x,y,z)_E&\mapsto& (\overline{x},\overline{y},\overline{z})_7M_4^\top.
\end{eqnarray*}

It is clear that the permutation action of the group given by
these generators on the $13$ subspaces $V_0$, \ldots, $V_T$, is exactly the
standard permutation action of $\SL_3(3)$.
In fact, the kernel of this action is trivial,
so that $a$, $b$ and $c$ generate a group isomorphic to $\SL_3(3)$, but we
shall not need this fact, and we shall not prove it here.
(It is in any case straightforward to show that all the generators
preserve the set of $13\times 72=936$ roots of the $13$ copies of 
$E_6$, after which the order of the group can be easily obtained computationally.)

Next we need to specify some elements generating the normal subgroup $3^{3+3}$.
First, the normal subgroup of order $3^3$ is generated by conjugates of
an element $d$ acting as powers of $\omega$ on each $6$-space, as follows:
$$(1,\omega,1,\omega,\omega,\omega,\ombar,\ombar,1,\omega,\ombar,\omega,1).$$
That is, $v_0\mapsto v_0$, $v_1\mapsto \omega v_1$, and so on.
\begin{lemma}
\label{lemmaD}
The group $\langle D,a\rangle$ is of shape $3^3{:}13$, in which the normal subgroup
of shape $3^3$ is generated by $d,d^a,d^{a^2}$.
\end{lemma}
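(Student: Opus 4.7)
My plan is to reduce the lemma to a short linear algebra computation in $\FF_3^{13}$. First, I would record $d$ via its exponent vector
\[
e = (0,1,0,1,1,1,2,2,0,1,2,1,0) \in \FF_3^{13},
\]
so that $d$ acts on $V_t$ as multiplication by $\omega^{e_t}$. Because $a$ merely permutes the subspaces cyclically by $t\mapsto t+1$ with no internal scaling, the conjugate $d^{a^i}$ is again diagonal and its exponent vector is the cyclic shift $\sigma^i e$, where $(\sigma e)_t = e_{t+1}$. In particular all the $d^{a^i}$ commute (being simultaneously diagonal) and have order dividing $3$, so together they generate an elementary abelian $3$-subgroup $N$; additively, $N$ is isomorphic to the $\FF_3[\langle a\rangle]$-submodule of $\FF_3^{13}$ generated by the $\sigma$-orbit of $e$.

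The crucial step is then a single recursion: I would verify directly that
\[
\sigma^3 e = e + \sigma e \quad \text{in } \FF_3^{13},
\]
which is a one-off coordinate comparison among four explicit vectors of length $13$. Once this identity is in hand, an immediate induction yields $\sigma^i e \in \langle e, \sigma e, \sigma^2 e\rangle$ for every $i$, so that $N = \langle d, d^a, d^{a^2}\rangle$. To pin down $|N|=27$, I would verify linear independence of $e$, $\sigma e$, $\sigma^2 e$ over $\FF_3$ by reading off coordinates at $t=0,1,2$: a putative relation $\alpha e + \beta\sigma e + \gamma\sigma^2 e = 0$ forces $\beta = 0$, then $\gamma = 0$, and hence $\alpha = 0$. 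Thus $N$ is elementary abelian of rank $3$.

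To assemble the semidirect product, note that $a$ normalizes $N$ by construction, since it cyclically permutes the generating set $\{d^{a^i}\}$. Conversely, $\langle a\rangle \cap N = 1$: the element $a$ has order $13$ and acts as a $13$-cycle on $\{V_0,\ldots,V_T\}$, while every element of $N$ stabilizes each $V_t$ setwise, so only the identity lies in both. Since $\gcd(27,13)=1$, this gives $\langle d, a\rangle = N \rtimes \langle a\rangle$ of order $351$ and shape $3^3{:}13$, as required. The only substantive ingredient is the verification of $\sigma^3 e = e + \sigma e$, but this is a finite coordinate check; the rest is formal.
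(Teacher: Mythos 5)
Your proof is correct and takes essentially the same route as the paper: your identity $\sigma^3 e = e + \sigma e$ is precisely the paper's pointwise-product computation $dd^{a^{-1}}=d^{a^{-3}}$ written additively (the choice of whether $a$- or $a^{-1}$-conjugation is the shift is a harmless convention flip that does not change the subgroup generated, since the span of three consecutive shifts is invariant in both directions). Your explicit linear-independence check and the split-extension bookkeeping merely spell out what the paper compresses into the statement that the minimum polynomial of $a$ on the conjugates of $d$ is $x^3+x^2-1$.
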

\begin{proof}
 First note that under pointwise multiplication the product of
$$(1,\omega,1,\omega,\omega,\omega,\ombar,\ombar,1,\omega,\ombar,\omega,1)$$
with
$$(\omega,1,\omega,\omega,\omega,\ombar,\ombar,1,\omega,\ombar,\omega,1,1)$$
is
$$(\omega,\omega,\omega,\ombar,\ombar,1,\omega,\ombar,\omega,1,1,\omega,1).$$
Thus  $$dd^{a^{-1}}=d^{a^{-3}},$$
which implies that  the minimum polynomial of $a$ in its action on the conjugates of $d$ is $x^3+x^2-1$.
In particular, the conjugates of $d$ by powers of $a$
generate an elementary abelian group of order
$3^3$.
\end{proof}
Modulo this group $D$, the next $3^3$-factor acts monomially on each $6$-space, 
generated by conjugates of an element $e$ which acts as follows:
\begin{center}
\begin{picture}(260,70)
\multiput(0,0)(0,60){2}{\line(1,0){260}}
\multiput(0,0)(20,0){14}{\line(0,1){60}}
\put(10,10){
\multiput(20,20)(20,0){3}{\circle*{4}}
\multiput(100,20)(20,0){5}{\circle*{4}}
\put(220,20){\circle*{4}}
\multiput(20,40)(40,0){2}{\vector(0,-1){40}}

\multiput(140,40)(20,0){3}{\vector(0,-1){40}}
\put(220,40){\vector(0,-1){40}}

\multiput(100,0)(20,0){2}{\vector(0,1){40}}
\put(40,0){\vector(0,1){40}}
}
\put(8,8){
\put(0,0){$1$}
\put(0,20){$1$}
\put(80,0){$\ombar$}\put(80,20){$\omega$}
\put(200,0){$\ombar$}\put(200,20){$\omega$}
\put(80,40){$1$}\put(200,40){$1$}
\put(0,40){$1$}
\put(240,0){$\ombar$}\put(240,20){$\omega$}\put(240,40){$1$}
}
\put(12,8){
\put(60,40){$\ombar$}\put(60,20){$\omega$}
\put(40,40){$\ombar$}\put(40,0){$\omega$}
\put(180,40){$\omega$}\put(180,20){$\ombar$}
\put(120,0){$\ombar$}\put(120,40){$\omega$}
\put(140,20){$\omega$}\put(140,40){$\ombar$}
\put(160,40){$\omega$}\put(160,20){$\ombar$}
}

\end{picture}
\end{center}

\begin{lemma}
The group $H = \langle a,b,d,e\rangle$ has the shape
$3^{3+3}{:}13{:}3$, in which the normal subgroup $E\cong 3^{3+3}$ is generated by
$e,e^a,e^{a^2}$.
\end{lemma}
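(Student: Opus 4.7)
The plan is to mirror the strategy used in Lemma \ref{lemmaD}. First I would check directly from the definition of $e$ that $e$ has order $3$ and that $e$ commutes with the diagonal element $d$; combined with Lemma \ref{lemmaD}, this shows $D$ is centralized by all $a$-conjugates of $e$, hence $D\subseteq Z(E)$. In particular $D\subseteq E$, provided I can exhibit $D$ as commutators of generators of $E$ --- which is the content of the next step.

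Next, I would compute the commutator $[e,e^{a^k}]$ for $k=1,2,3$ by direct calculation on each of the $13$ subspaces. Since $e$ acts monomially on each $V_t$ and $e^{a^k}$ is its subscript-shifted version, each commutator acts on each $V_t$ by a diagonal matrix whose entries are cube roots of unity; inspection against the list in Lemma \ref{lemmaD} should identify each such commutator as an element of $D$. Thus $[E,E]\subseteq D$, so $E/D$ is abelian. A single further multiplication (entirely inside $D$-cosets) should yield a cubic recurrence of the form $e^{a^3}\equiv w(e,e^a,e^{a^2})\pmod D$, analogous to the identity $dd^{a^{-1}}=d^{a^{-3}}$, so that $E=\langle e,e^a,e^{a^2}\rangle D$ and $|E/D|\le 3^3$. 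For the reverse bound I would observe that the restrictions of $e,e^a,e^{a^2}$ to a single fixed subspace (say $V_0$) give monomial matrices whose images modulo scalar diagonal matrices are linearly independent, so $|E/D|=3^3$, and therefore $|E|=3^6$ with shape exactly $3^{3+3}$.

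Finally, $E$ is normal in $H$: it is visibly $a$-invariant (the conjugation permutes its generators cyclically), and one direct calculation of $b^{-1}eb$ --- rewriting the result as a product of $a$-conjugates of $e$ times a diagonal element of $D$ --- shows that $b$ also normalizes $E$. On top, $b$ permutes the subscripts via $t\mapsto 3t$, which forces $b^{-1}ab=a^3$, so $\langle a,b\rangle$ has order $39$ with structure $13{:}3$. Since $E$ fixes every subspace $V_t$ setwise while $\langle a,b\rangle$ acts faithfully on the $13$-point set of subspaces, $\langle a,b\rangle\cap E=1$, and we conclude $H\cong 3^{3+3}{:}13{:}3$.

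The main obstacle is the commutator calculation $[e^{a^i},e^{a^j}]\in D$. Because $e$ acts by thirteen different monomial patterns (one per $V_t$) and the shift $a$ permutes these patterns, the commutator on each subspace is \emph{a priori} a product of two monomial matrices that could well be monomial but nondiagonal. One must verify that on every subspace the nondiagonal parts cancel and leave a pure scalar action by a cube root of unity. The cyclic action of $a$ reduces the essentially distinct cases, but the verification that each of the diagrammatic patterns for $e$ and $e^{a^k}$ share a common monomial ``shape'' on every $V_t$ is the real combinatorial content of the lemma.
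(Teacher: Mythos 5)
Your overall architecture matches the paper's: the one computation that matters is the analogue of $dd^{a^{-1}}=d^{a^{-3}}$, namely (as the paper states it) $e^a\cdot e=e^{a^3}\cdot d^{-1}$, which gives your ``cubic recurrence modulo $D$''; the top is handled by $a^b=a^3$ together with the fact that $b$ normalises $\langle d,e\rangle^{\langle a\rangle}$ (in fact $e^b=e$ exactly), and the splitting over the $13{:}3$ is as you say. The genuine gap is your ``reverse bound''. You propose to certify $|E/D|=3^3$ by restricting $e,e^a,e^{a^2}$ to a single subspace, ``say $V_0$'', and claiming their images modulo scalars are independent. First, the specific choice fails outright: $e$ acts \emph{trivially} on $V_0$ (the paper uses exactly this later, when it notes that $d,d^a,e$ lie in the kernel of the action of the point stabiliser on $V_0$, while $e^a,e^{a^2}$ act as an extraspecial $3^{1+2}$). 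Second, no choice of $t$ can repair this: once you have your step (2), namely $[E,E]\le D$, and observe that $D$ restricts to scalars on each $V_t$, the image of $E$ on a single $V_t$ modulo scalars is an abelian group generated by elements of order $3$; but the monomial group of a complex $3$-space modulo scalars has extraspecial Sylow $3$-subgroup $3^{1+2}$, whose abelian subgroups have order at most $9$. So a single subspace can never witness order $27$, and this part of your argument cannot be patched by a better choice of subspace.

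The cheap and correct repair is the one the paper uses, and it comes straight from the recurrence you already have. Modulo $D$ the conjugates of $e$ form an elementary abelian $3$-group on which $a$ (of order $13$) acts, and $e^ae=e^{a^3}d^{-1}$ says that $\bar e$ is annihilated by $x^3-x-1$, which is irreducible over $\mathbb F_3$; since $e\notin D$ (it is not diagonal), the cyclic $\mathbb F_3\langle a\rangle$-module generated by $\bar e$ is a copy of $\mathbb F_{27}$, so $|ED/D|=3^3$ exactly, giving the upper and lower bounds simultaneously and showing $e,e^a,e^{a^2}$ are independent mod $D$. Two smaller points. Your first step infers that all $a$-conjugates of $e$ centralise $D$ from $[e,d]=1$ plus Lemma~\ref{lemmaD}; conjugating $[e,d]=1$ only gives $[e^{a},d^{a}]=1$, so argue instead that every element of $D$ acts as a complex scalar on each $V_t$ while $e$ and its conjugates are $\mathbb C$-linear and preserve each $V_t$. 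Also, checking that the commutators $[e,e^{a^i}]$ \emph{lie in} $D$ gives $[E,E]\le D$ but not yet $D\le\langle e,e^a,e^{a^2}\rangle$; for the generation statement as worded you still need these commutators (equivalently the element $e^{-a^3}e^ae$ and its conjugates) to generate all of $D$, a point the paper itself leaves implicit.
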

\begin{proof}
A similar calculation applied to $e$ yields
$$e^a.e=e^{a^3}.d^{-1},$$ so that modulo the group
$D=\langle d,d^a,d^{a^2}\rangle\cong3^3$, 
the conjugates of $e$ generate another $3^3$. 
This time the minimum polynomial of the action of $a$ is $x^3-x-1$.
Therefore the group generated by $a$, $d$
and $e$ has the shape $3^{3+3}{:}13$. It remains to check that $b$ normalizes this group.
In fact it is easy to see that $a^b=a^3$, and not much harder to check that $e^b=e$,
so this completes the proof.
\end{proof}

Let $G$ be the group (in fact of shape $3^{3+3}{:}\LL_3(3)$)
generated by $a,b,c,d,e$, and let  
$L$
denote the group $\langle a, b, c\rangle$, which is in fact isomorphic to $\SL_3(3)$
(although we have not proved this here). The element $c$ fixes the points $0,1,2,8,T$,
and therefore the subgroup $$F=\langle E, c,c^{a^{-1}},c^{a^{-2}},c^{a^5}, c^a\rangle$$ 
fixes the point $0$. In the action of $F$ on $V_0$, we see that $d,d^a,e$ lie in
the kernel, and $e^a,e^{a^2}$ generate an extraspecial group of order $3^3$,
which is the image of $E$ so is obviously normal in $F$ modulo the kernel of the action.
A little calculation shows that $F$ acts on $V_0$ as
$3^{1+2}{:}2S_4$, which is sometimes known as $\Gamma\U_3(2)$, and is isomorphic
to a maximal subgroup of the Weyl group of type $E_6$, which is itself isomorphic
to $\Sigma\U_4(2)$.
It is easy to see that $F$ acts irreducibly on $V_0$, and therefore $G$ acts
irreducibly on $V$.

Similarly, the five lines fixed by $c$ are 
$$\{0,1,3,9\},\{1,2,4,X\}, \{5,6,8,1\}, \{E,T,1,7\}, \{T,0,2,8\},$$ 
so that the line $\{0,1,3,9\}$
is fixed by $$c,c^{a^{-1}},c^{a^{-5}},c^{a^2},c^a.$$ 
These elements act on the line as the permutations
$(3,9)$, $(3,9)$, $(0,1)$, $(0,9)$, and $1$ respectively, so induce
the full $S_4$ of permutations. Indeed, they generate the full line stabilizer
$3^2{:}2S_4$ inside $\LL_3(3)$.
The elements of $D$ which
act non-trivially 
on this line act as 
$$(1,\omega,\omega,\omega), (\omega,1,\omega,\ombar),
(\omega,\omega,\ombar,1), (\omega,\ombar,1,\omega)$$ or their inverses.
These elements will be used frequently in the sequel.

\section{The Lie product}
In this section we show that there is (up to real scalar multiplication)
a unique bilinear product invariant under the action of the group
$G=\langle a,b,c,d,e\rangle$ on the $78$-space $V$,
and moreover that this product satisfies the Jacobi identity. 
First we show that there is at most
one such product, and then we show that any product so defined is anti-symmetric,
before using this to prove that there  is indeed a non-zero such product
invariant under $G$, and that this product satisfies the Jacobi identity.
\begin{lemma}\label{unique}
Up to a real scalar multiplication, there is a unique $G$-invariant bilinear product on $V$.
\end{lemma}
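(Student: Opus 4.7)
My plan is to combine Frobenius reciprocity with the character-theoretic constraints imposed by the normal subgroup $D \cong 3^3$.

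The $G$-invariant Euclidean form identifies $V$ with $V^*$, so $\mathrm{Hom}_G(V \otimes V, V) \cong \mathrm{Hom}_G(V, V \otimes V)$. Because $G$ acts transitively on $\{V_0, \ldots, V_{T}\}$ with stabilizer $F$, the decomposition $V = \bigoplus_t V_t$ identifies $V$ with $\mathrm{Ind}_F^G V_0$, and Frobenius reciprocity yields
$$\mathrm{Hom}_G(V, V \otimes V) \cong \mathrm{Hom}_F(V_0, V \otimes V).$$
The right-hand side further decomposes according to the $F$-orbits on ordered pairs $(s, t) \in \FF_{13}^2$: each orbit representative $(s_0, t_0)$, with pair-stabilizer $F_{(s_0,t_0)}$, contributes a summand $\mathrm{Hom}_{F_{(s_0,t_0)}}(V_0, V_{s_0} \otimes V_{t_0})$.

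The key constraint comes from $D \subseteq F_{(s_0, t_0)}$. By Lemma \ref{lemmaD}, the minimal polynomial of $a$ acting on $D$ is $x^3 + x^2 - 1$, which is irreducible over $\mathbb F_3$, so $a$ has order $13$ on both $D$ and its character group $\widehat D$. Consequently the $26$ nontrivial characters of $D$ split into two $\langle a\rangle$-orbits of size $13$, and the $13$ characters $\chi_0, \ldots, \chi_T$ by which $D$ acts on the $V_t$ form exactly one such orbit---so in particular they are pairwise distinct. Hence $\mathrm{Hom}_{F_{(s_0,t_0)}}(V_0, V_{s_0} \otimes V_{t_0})$ vanishes unless $\chi_{s_0}\chi_{t_0} = \chi_0$, a sharp condition leaving only a short, explicit list of orbit representatives to analyze.

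For each surviving orbit I would pin down the remaining Hom-space by exploiting the action of $E/D$ and of the $L$-stabilizer of $(s_0, t_0)$, applying Schur's lemma via the irreducibility of $F$ on $V_0$ noted at the end of Section 2. The main obstacle will be the case analysis corresponding to the incidence geometry of $\mathrm{PG}(2,3)$: one must check that the surviving orbits together contribute exactly one dimension, neither more (which would give extra invariant products beyond the expected Lie bracket) nor fewer (which would force the product to be zero). In particular, any diagonal-type contribution $V_0 \otimes V_0 \to V_u$ arising when $\chi_0^2 = \chi_u$, and the interaction with the element $c$---which both permutes subspaces and twists them by the unitary matrices $M_i$---will require the most careful accounting.
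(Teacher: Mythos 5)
Your framework (Frobenius reciprocity for $V\cong\mathrm{Ind}_F^G V_0$ plus character constraints from the normal $3^3$) is sound and genuinely different in flavour from the paper's hands-on computation, but as written it stops exactly where the real work begins, and one of its stated constraints is wrong over $\mathbb{R}$. Each $V_t$ is a real $6$-space; complexifying, $D$ acts on it through the conjugate pair $\chi_t,\overline{\chi_t}$. So the $D$-obstruction to a component $V_{s_0}\otimes_{\mathbb{R}}V_{t_0}\to V_0$ is not $\chi_{s_0}\chi_{t_0}=\chi_0$ but that one of the four products $\chi_{s_0}^{\pm1}\chi_{t_0}^{\pm1}$ equals $\chi_0^{\pm1}$. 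This is not pedantry: it is precisely why the product of $V_0$ with $V_1$ can meet both $V_3$ and $V_9$ (as the paper's argument shows), so it changes the list of surviving orbits. Worse, on the diagonal the condition $\chi_0^2=\overline{\chi_0}$ holds automatically, so the $D$-argument does not exclude a component $V_0\otimes V_0\to V_0$; and since $E$ acts on $V_0$ through an extraspecial $3^{1+2}$ whose faithful $3$-dimensional module $W$ satisfies $W\otimes W\cong \overline{W}^{\oplus 3}$, even $E$-equivariance leaves a $3$-dimensional space of candidate products there. You correctly flag the diagonal and the element $c$ as delicate, but flagging is not disposing of them.

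The decisive step you defer — showing that all surviving orbit contributions total at most one \emph{real} dimension — cannot be obtained from Schur's lemma applied to the irreducible action of $F$ on $V_0$, because in $\mathrm{Hom}_{F_{(s_0,t_0)}}(V_0,V_{s_0}\otimes V_{t_0})$ the pair-stabilizer is a proper subgroup of $F$, under which $V_0$ need not be irreducible and the relevant isotypic pieces occur with multiplicity. What actually cuts the space down is the explicit action of the extra elements: conjugates of $e$ fixing the chosen vectors, and above all the $c$-conjugates, whose antilinearity (complex conjugation composed with the matrices $M_i$) is exactly what reduces an a priori complex-scalar freedom to a real scalar — the whole point of the phrase ``up to a real scalar'' in the lemma. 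This is the content of the paper's proof (its chain of constraints using $e^a$, $c^a$ and $c$ after reducing, via $2$-transitivity and the irreducibility of $3^{3+3}$ on $V_0$, to the single value $[(1,0,0)_0,(1,0,0)_1]$), and it is absent from your proposal; until that computation is carried out, the lemma is not proved.
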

\begin{proof}
Since $G$ acts $2$-transitively on the $13$ spaces $V_t$, it is enough to determine
the product on $V_0\times V_0$ and on $V_0\times V_1$. Now the action of the conjugates
of $d$ shows immediately that the product is zero on $V_0\times V_0$, and that the
product of any vector in $V_0$ with any vector in $V_1$ lies in $V_3+V_9$.

Since the group $3^{3+3}$ acts irreducibly on $V_0$, we only need to consider products
of $(1,0,0)_0$ with $V_1$. Moreover, the stabilizer in $3^{3+3}$ of $(1,0,0)_0$ permutes
the nine spanning vectors 
$$(\omega^i,0,0)_1, (0,\omega^i,0)_1, (0,0,\omega^i)_1$$ 
of $V_1$ transitively,
so it is sufficient to determine the product of $(1,0,0)_0$ with $(1,0,0)_1$.

The element $e^a$ fixes $(1,0,0)_0$ and $(1,0,0)_1$, and maps
\begin{eqnarray*}
(x,y,z)_3&\mapsto& (y\ombar,z,x\omega)_3,\cr
(x,y,z)_9&\mapsto& (z\omega,x\ombar,y)_9.
\end{eqnarray*}
 Therefore the product of $(1,0,0)_0$
with $(1,0,0)_1$ lies in $\mathbb C(\ombar,1,1)_3 + \mathbb C(\omega,1,1)_9$.

Next, $c^a$ fixes $(1,0,0)_0$ and negates $(1,0,0)_1$, and acts on
$\mathbb C(\ombar,1,1)_3$ by fixing $\theta(\ombar,1,1)_3$ and negating $(\ombar,1,1)_3$.
Similarly, it fixes $\theta(1,\ombar,\ombar)_9$ and negates $(1,\ombar,\ombar)_9$.
Therefore the given product lies in $\mathbb R(\ombar,1,1)_3 + \mathbb R(1,\ombar,\ombar)_9$.

Finally, $c$ itself acts by negating both $(1,0,0)_0$ and $(1,0,0)_1$,
and interchanging $(\ombar,1,1)_3$ with $-(1,\ombar,\ombar)_9$. Therefore the given product
is a real multiple of $$(\ombar,1,1)_3-(1,\ombar,\ombar)_9.$$
Hence there is up to scalars 
at most one bilinear product invariant under $G$, as claimed.
\end{proof}

Our strategy for showing that such a (non-zero)
product actually exists divides into four steps, which are dealt with in Lemmas
\ref{step1}, \ref{step2}, \ref{step3} and \ref{step4} respectively:
\begin{enumerate}
\item show that any such product is anti-symmetric;
\item find two particular values of the product whose images under $H$
are sufficient to define the whole product;
\item prove that this product is well-defined, that is, it is invariant under $H$;
\item prove that this product is invariant under $c$.
\end{enumerate}

\begin{lemma}\label{step1}
Any $G$-invariant bilinear product on $V$ is anti-symmetric.
\end{lemma}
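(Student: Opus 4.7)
The plan is to use Lemma \ref{unique} to reduce the question to the sign of a single scalar, and then to pin down that sign by exhibiting a group element that exchanges the two distinguished vectors used in the proof of that lemma.

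Since $\phi^{\mathrm{op}}(u,v) := \phi(v,u)$ is again a $G$-invariant bilinear product on $V$, Lemma \ref{unique} forces $\phi^{\mathrm{op}} = \lambda\phi$ for some $\lambda \in \mathbb{R}$; applying the swap twice gives $\lambda^2 = 1$, so $\lambda \in \{+1,-1\}$. The case $\phi \equiv 0$ is trivial, and the claim reduces to showing $\lambda = -1$. I would do this by repeating the argument of Lemma \ref{unique} with the two arguments interchanged: each element used there (a conjugate of $d$, then $e^a$, $c^a$, and $c$) either fixes both $u = (1,0,0)_0$ and $v = (1,0,0)_1$, or negates exactly one of them, or negates both, so the resulting spectral constraints depend only on the unordered pair. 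Hence $\phi(v,u)$ also lies on the $1$-dimensional real subspace $\mathbb{R}w$, where $w = (\ombar,1,1)_3 - (1,\ombar,\ombar)_9$; writing $\phi(u,v) = sw$ and $\phi(v,u) = s'w$ we have $\lambda = s'/s$.

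To fix the sign, I would exhibit an element $g \in G$ with $gu = v$ and $gv = u$. The permutation action of $L = \langle a,b,c\rangle$ on the thirteen indices is $2$-transitive (being the standard action of $\LL_3(3)$), so some element of $L$ interchanges $0$ and $1$; composing with elements of the normal subgroup $3^{3+3}$ (which fixes every $V_t$ and acts transitively on the nine spanning vectors of each) provides enough freedom to align $gu$ and $gv$ exactly with $v$ and $u$. Invariance then gives $\lambda\,sw = \phi(v,u) = \phi(gu,gv) = g\,\phi(u,v) = s\,gw$, so $\lambda$ equals the scalar by which $g$ acts on $w$. Since $g$ sends the line $\{0,1,3,9\}$ to itself while swapping $0$ and $1$, it permutes $\{V_3,V_9\}$, and a direct calculation on $V_3 \oplus V_9$ analogous to the verification $cw = w$ in Lemma \ref{unique} should give $gw = -w$, hence $\lambda = -1$.

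The hard part will be the final sign calculation. Writing $g$ explicitly as a word in $a,b,c$ and an element of $3^{3+3}$, then tracking its action on $V_3 \oplus V_9$ through the matrix formulas of Section~2, is essentially bookkeeping, but it is the step where the symmetric alternative is genuinely excluded rather than merely ruled out on formal grounds.
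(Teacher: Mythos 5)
Your formal reduction is sound: $\phi^{\mathrm{op}}$ is again $G$-invariant and bilinear, so Lemma~\ref{unique} gives $\phi^{\mathrm{op}}=\lambda\phi$ with $\lambda^2=1$, and the whole lemma comes down to excluding $\lambda=+1$ (this observation is used implicitly in the paper as well). The genuine gap is in your mechanism for fixing the sign: you need an element $g\in G$ with $g(1,0,0)_0=(1,0,0)_1$ and $g(1,0,0)_1=(1,0,0)_0$ exactly, and the existence argument you sketch does not work. The normal subgroup $3^{3+3}$ acts on each $V_t$ monomially (modulo the scalar group $D$), so within $V_1$ it can only move $(1,0,0)_1$ around the nine coordinate vectors $(\omega^i,0,0)_1,(0,\omega^i,0)_1,(0,0,\omega^i)_1$; but a lift of a permutation interchanging the points $0$ and $1$ typically does \emph{not} act monomially on $V_0$ and $V_1$. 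For instance $c^{a^{-5}}$, which induces $(0,1)$ on the line $\{0,1,3,9\}$, sends $(1,0,0)_0$ to $\frac{\theta}{3}(\omega,\omega,\omega)_1$ and $(1,0,0)_1$ to $\frac{\theta}{3}(\omega,\ombar,\ombar)_0$, and no element of $3^{3+3}$ can carry these back to $(1,0,0)_1$ and $(1,0,0)_0$. So ``$2$-transitivity on the indices plus transitivity of the monomial group on the nine spanning vectors'' does not produce your $g$; whether an exact swap element exists at all is unclear and would itself require a computation with the matrices of Section~2 (and even granted its existence, the eigenvalue computation $gw=-w$ still remains, so nothing is saved over a direct calculation).

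The paper's proof shows how to get the sign without any swap element: it evaluates a single product in both orders by two independent routes. Applying $c^{a^{-5}}$ to the defining relation gives the value of $[\frac{\theta}{3}(\omega,\omega,\omega)_1,\frac{\theta}{3}(\omega,\ombar,\ombar)_0]$, while the reversed product $[\frac{\theta}{3}(\omega,\ombar,\ombar)_0,\frac{\theta}{3}(\omega,\omega,\omega)_1]$ is computed by bilinearity from the full $V_0\times V_1$ table, which is generated from the one defining value by conjugates of $d$ and $e$ (these stabilize both $V_0$ and $V_1$). The two values come out as negatives of each other, and then uniqueness (your $\lambda=\pm1$ argument) propagates anti-symmetry to the whole product. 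If you wish to keep your framework, replace the hypothetical exact-swap element by this kind of two-route evaluation: bilinearity applied to non-basis vectors such as $\frac{\theta}{3}(\omega,\omega,\omega)_1$ is what substitutes for the group element you cannot exhibit.
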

\begin{proof}
From the proof of Lemma~\ref{unique}, we may assume that
the product, written $[u,v]$, satisfies 
$$[(1,0,0)_0,(1,0,0)_1]=(\ombar,1,1)_3-(1,\ombar,\ombar)_9.$$
Applying $c^{a^{-5}}$ to this equation gives
\begin{eqnarray*}
[\frac{\theta}{3}(\omega,\omega,\omega)_1,\frac{\theta}{3}(\omega,\ombar,\ombar)_0]
&= &(\omega,1,1)_3 M_2 + (1,\ombar,\ombar)_9\cr
&=& (\omega,\ombar,\ombar)_3+(1,\ombar,\ombar)_9
\end{eqnarray*}
We now calculate the product of these two vectors the other way round.
Applying $e$ to the defining equation gives 
$$[(1,0,0)_0,(1,0,0)_1]=[(1,0,0)_0,(0,1,0)_1]=[(1,0,0)_0,(0,0,1)_1].$$
Similarly, applying other conjugates of $e$ and $d$ we obtain the following
multiplication table:
$$
\begin{array}{c|ccc|}
&(1,0,0)_1&(0,1,0)_1&(0,0,1)_1\cr\hline
(1,0,0)_0 &(\ombar,1,1)_3-(1,\ombar,\ombar)_9&
(\ombar,1,1)_3-(1,\ombar,\ombar)_9&(\ombar,1,1)_3-(1,\ombar,\ombar)_9\cr
(0,1,0)_0&(1,\ombar,1)_3-(\omega,\omega,\ombar)_9&
(\ombar,\omega,\ombar)_3-(1,1,\omega)_9&
(\omega,1,\omega)_3-(\ombar,\ombar,1)_9\cr
(0,0,1)_0&(1,1,\ombar)_3-(\omega,\ombar,\omega)_9&
(\omega,\omega,1)_3-(\ombar,1,\ombar)_9&
(\ombar,\ombar,\omega)_3-(1,\omega,1)_9\cr\hline
\end{array}
$$
In fact, applying conjugates of $d$ is quite easy:
if we multiply $v_0$ by $\omega$ and fix $v_1$ then we must
multiply $v_3$ by $\omega$ and $v_9$ by $\ombar$. On the other hand,
if we fix $v_0$ and multiply $v_1$ by $\omega$, then we must multiply
both $v_3$ and $v_9$ by $\omega$.
This leads quickly to the equations
\begin{eqnarray*}
[(\omega,0,0)_0,(\omega,\omega,\omega)_1]&=& 3(\omega,\ombar,\ombar)_3 -
3(1,\ombar,\ombar)_9\cr
[(0,\ombar,0)_0,(\omega,\omega,\omega)_1]&=&0\cr
[(0,0,\ombar)_0,(\omega,\omega,\omega)_1]&=&0
\end{eqnarray*}
from which we obtain
\begin{eqnarray*}
[\frac{\theta}{3}(\omega,\ombar,\ombar)_0,\frac{\theta}{3}(\omega,\omega,\omega)_1]
&=& -(\omega,\ombar,\ombar)_3 - (1,\ombar,\ombar)_9\cr
&=& -
[\frac{\theta}{3}(\omega,\omega,\omega)_1,\frac{\theta}{3}(\omega,\ombar,\ombar)_0]
\end{eqnarray*}
Since, by Lemma~\ref{unique}, 
this single non-zero value of the product defines the whole multiplication, it follows that
the whole multiplication is anti-symmetric. 
\end{proof}

\begin{lemma}\label{step2}
Any $G$-invariant product on $V$ is determined by anti-symmetry and the 
images under $H$ of
just two products, which may be taken (up to an overall scalar multiplication) to be
\begin{eqnarray*}
[(1,0,0)_0,(1,0,0)_1]&=&(\ombar,1,1)_3-(1,\ombar,\ombar)_9,\cr
[(1,0,0)_1,(1,0,0)_9]&=& -(1,\omega,\omega)_0+(\ombar,\omega,\omega)_3.
\end{eqnarray*}
\end{lemma}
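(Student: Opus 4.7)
The plan is to reduce the problem to analyzing the $H$-orbits on ordered pairs of indices and then to apply the stabilizer argument of Lemma~\ref{unique} once per orbit representative. Since $E \triangleleft H$ acts trivially on the set of $13$ subspaces, the orbits of $H$ on ordered pairs $(s, t)$ with $s \ne t$ coincide with those of $H/E \cong \langle a, b\rangle \cong 13{:}3$. Translation by $a$ reduces each orbit to a $b$-orbit on the difference $t - s \in \FF_{13}^\times$, and since $b$ acts by multiplication by $3$, these $b$-orbits are $\{1, 3, 9\}$, $\{2, 5, 6\}$, $\{4, 10, 12\}$, and $\{7, 8, 11\}$. Hence $\langle a, b\rangle$ has exactly $4$ orbits on ordered pairs, and since negation swaps the first with the third and the second with the fourth, anti-symmetry collapses these to exactly $2$ classes, with representatives $(0, 1)$ and $(1, 9)$ --- precisely the pairs appearing in the statement.

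For each representative, the task is to show that the single specified product value determines the entire bilinear product on $V_s \times V_t$. For the pair $(0, 1)$ this is essentially the argument already given in the proof of Lemma~\ref{unique}: the stabilizer in $E$ of $(1,0,0)_0$ acts transitively on the nine spanning vectors $\{(\omega^i,0,0)_1, (0,\omega^i,0)_1, (0,0,\omega^i)_1\}$ of $V_1$, and $E$ itself acts transitively on the analogous spanning set of $V_0$, so $E$-invariance propagates the given value to all products in $V_0 \times V_1$. For the pair $(1, 9)$ I would carry out the analogous computation, showing that suitable conjugates of $d$ (which act trivially on $V_1$ but as a scalar on $V_9$), combined with monomial elements constructed from conjugates of $e$, exhibit the same transitivity on spanning vectors of $V_9$ under the stabilizer of $(1, 0, 0)_1$.

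Once the products on $V_0 \times V_1$ and $V_1 \times V_9$ are determined, transporting by $\langle a, b\rangle$ covers all pairs of difference in $\{1,3,9\}$ and $\{7,8,11\}$ respectively, and anti-symmetry covers the remaining pairs (differences in $\{4,10,12\}$ and $\{2,5,6\}$); together with $[V_s, V_s] = 0$ from the proof of Lemma~\ref{unique}, this accounts for the entire bilinear product on $V$. The main obstacle is the verification of the $E$-stabilizer transitivity for the pair $(1, 9)$: because the kernels of the $E$-action on distinct $V_t$ are different subgroups of $E$, this transitivity cannot simply be transferred from the $(0, 1)$ case and must be checked directly using the defining phase patterns of $d$ and the monomial pattern of $e$.
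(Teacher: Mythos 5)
Your orbit analysis is correct and coincides with the first step of the paper's argument: $\langle a,b\rangle\cong 13{:}3$ has exactly two orbits on unordered pairs of points (your count of four orbits on ordered pairs, collapsed by anti-symmetry, is the same thing), and the stabilizer argument from the proof of Lemma~\ref{unique} reduces each orbit to a single product value. But this only establishes the weaker claim that any $G$-invariant product is determined by anti-symmetry and the $H$-images of the two values $[(1,0,0)_0,(1,0,0)_1]$ and $[(1,0,0)_1,(1,0,0)_9]$, \emph{whatever those values happen to be}. The lemma asserts more: once the first product is normalized as in Lemma~\ref{unique}, the second product is forced to be exactly $-(1,\omega,\omega)_0+(\ombar,\omega,\omega)_3$ (the phrase ``up to an \emph{overall} scalar'' means the relative normalization of the two values is part of the claim). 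This is the real content of the paper's proof and it is absent from your proposal. It also cannot be recovered from $H$-invariance alone: $H$ preserves each of the two orbit classes of unordered pairs separately, so an $H$-invariant anti-symmetric bilinear product could be rescaled on the $\{1,9\}$-class independently of the $\{0,1\}$-class; the relative value is pinned down only by invariance under $c\in G\setminus H$. The paper derives it by applying $c$ to the $a$-translate $[(1,0,0)_8,(1,0,0)_9]=(\ombar,1,1)_E-(1,\ombar,\ombar)_4$ of the first value, translating back by a power of $a$, applying $e^{a^{-3}}$, and summing three equations to obtain $[(\theta,0,0)_1,(1,0,0)_9]$ and hence the stated value. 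This computation is moreover reused in the proof of Lemma~\ref{step4}, so it cannot simply be omitted.

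A secondary, smaller point: for the pair $(1,9)$ you defer the verification that the stabilizer of $(1,0,0)_1$ in $3^{3+3}$ acts transitively on the nine spanning vectors of $V_9$ (equivalently, that a single value determines the product on $V_1\times V_9$). The paper also dismisses this as ``a similar argument'', so a sketch is acceptable there; but since, as you yourself note, the kernels of the $E$-action on distinct $V_t$ differ, this check does need to be carried out with the explicit conjugates of $d$ and $e$, and your proposal stops short of doing so.
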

\begin{proof}
Since the group $13{:}3$ generated by $a$ and $b$ has just two (regular) orbits on
unordered pairs of the $13$ points, represented by $\{0,1\}$ and $\{1,9\}$,
it suffices to determine the product on $V_1\times V_9$.
A similar argument to that given in the second paragraph of
the proof of Lemma~\ref{unique}
shows that it is sufficient to determine
$[(1,0,0)_1,(1,0,0)_9]$.

Applying $c$ to the equation
$$[(1,0,0)_8,(1,0,0)_9]=(\ombar,1,1)_E-(1,\ombar,\ombar)_4$$
gives
\begin{eqnarray*}
-[\frac{\theta}{3}(1,1,1)_8,(\ombar,0,0)_3] &=&
(\omega,1,1)_7 M_4^\top - (1,\omega,\omega)_X\cr
&=& (\theta\omega,0,0)_7 - (1,\omega,\omega)_X\cr
\Rightarrow
[\frac{\theta}{3}(1,1,1)_1,(\ombar,0,0)_9]&=&
-(\theta\omega,0,0)_0 + (1,\omega,\omega)_3\cr
\Rightarrow
[\frac{\theta}{3}(1,1,1)_1,(1,0,0)_9]&=&
-(\theta,0,0)_0+(\ombar,1,1)_3
\end{eqnarray*}
Now apply $e^{a^{-3}}$ to obtain
\begin{eqnarray*}
[\frac{\theta}{3}(1,\omega,\ombar)_1,(1,0,0)_9]&=&
-(0,\theta\omega,0)_0+(\omega,1,\omega)_3\cr
[\frac{\theta}{3}(1,\ombar,\omega)_1,(1,0,0)_9]&=&
-(0,0,\theta\omega)_0+(\omega,\omega,1)_3
\end{eqnarray*}
and add up the last three equations to get
\begin{eqnarray*}
[(\theta,0,0)_1,(1,0,0)_9]&=& 
-\theta(1,\omega,\omega)_0-\theta(\ombar,\omega,\omega)_3\cr
\Rightarrow
[(1,0,0)_1,(1,0,0)_9]&=& -(1,\omega,\omega)_0+(\ombar,\omega,\omega)_3
\end{eqnarray*}
as required.
To obtain all values of the product we now only need to apply elements of $H$
to the two values given, and use anti-symmetry and bilinearity.
\end{proof}

\begin{lemma}\label{step3}
There is a unique $H$-invariant anti-symmetric product on $V$
with 
\begin{eqnarray*}
[(1,0,0)_0,(1,0,0)_1]&=&(\ombar,1,1)_3-(1,\ombar,\ombar)_9,\cr
[(1,0,0)_1,(1,0,0)_9]&=& -(1,\omega,\omega)_0+(\ombar,\omega,\omega)_3.
\end{eqnarray*}
\end{lemma}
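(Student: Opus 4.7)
The plan is to construct the product by transporting the two seed values across $V$ under the action of $H$, then verify that this prescription is well-defined. Uniqueness is already contained in Lemma~\ref{step2}, so only existence is at issue. The subgroup $\langle a,b\rangle$ of order $39$ acts on $\{0,\dots,T\}$ as the affine group $\{t\mapsto 3^jt+k\}$, and its stabilizer of any ordered pair of distinct indices is readily seen to be trivial. Since $\langle a,b\rangle$ has exactly two orbits on unordered pairs, represented by $\{0,1\}$ and $\{1,9\}$, every ordered pair $(s,t)$ of distinct indices lies in a $\langle a,b\rangle$-orbit containing exactly one of $(0,1)$, $(1,0)$, $(1,9)$, $(9,1)$. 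One therefore defines the product on $V_s\times V_t$ with $s\ne t$ by transporting the appropriate seed value along the unique element of $\langle a,b\rangle$ that carries the chosen representative to $(s,t)$, extending to the reversed orbit by anti-symmetry and to all of $V_s\times V_t$ by bilinearity. On $V_s\times V_s$ the product is forced to be zero, exactly as in the first paragraph of the proof of Lemma~\ref{unique}.

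For this prescription to give a genuinely $H$-invariant product, the full stabilizer in $H$ of each ordered pair of indices must preserve the corresponding seed value. Since the stabilizers of $(V_0,V_1)$ and $(V_1,V_9)$ in $\langle a,b\rangle$ are trivial and every element of $E$ preserves each $V_t$ as a set, the relevant stabilizer is $E$ in both cases. So well-definedness is equivalent to showing
\[
g[u,v] = [gu,gv]\qquad \text{for all }g\in E,\ u\in V_0,\ v\in V_1,
\]
and the analogous identity on $V_1\times V_9$. By bilinearity, and because $E$ is generated by the six conjugates of $d$ and $e$, this reduces to a finite check on these generators applied to the nine distinguished spanning vectors of each $V_t$.

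The verification for the three conjugates of $d$ is almost immediate: each acts as a scalar on every $V_t$, and one reads off from the defining list of $d$ that the product of the scalars on $V_0$ and $V_1$ matches the product of the scalars on $V_3$ and $V_9$, with an analogous identity for the $(V_1,V_9)$ seed. The substantive step is the verification for the three conjugates $e$, $e^a$, $e^{a^2}$, which act as genuine monomial permutations of the spanning vectors in each $V_t$. This check was in effect already initiated in the proof of Lemma~\ref{step1}, where applying $e$ to the defining equation produced the first row of the explicit multiplication table; the analogous table must be completed for the second seed on $V_1\times V_9$, with its entries compared against the individual actions of each $e^{a^i}$ on $V_0$, $V_3$ and $V_9$ simultaneously.

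I expect the main obstacle to be purely combinatorial bookkeeping rather than any conceptual difficulty: for each of the six $E$-generators one must simultaneously track the monomial action on four different $V_t$'s at once, and confirm that the various powers of $\omega$ conspire so that $h[u,v]$ coincides with $[hu,hv]$ in every case. A single misidentified cube root of unity would destroy the whole construction, so the verification is delicate; however, the consistency of these roots is essentially guaranteed by the fact that the seed values themselves were derived in Lemmas~\ref{step1} and~\ref{step2} by applying these same $E$-generators in a consistent manner.
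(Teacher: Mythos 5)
Your overall strategy coincides with the paper's: transport the two seed values around by $H$, observe that the $13{:}3$ quotient has trivial stabilizers on ordered pairs of distinct indices so contributes no consistency conditions, and reduce everything to a check involving $E\cong 3^{3+3}$ on the two blocks $V_0\times V_1$ and $V_1\times V_9$. However, two steps are misstated in ways that matter. First, ``transporting the seed value along $\langle a,b\rangle$ \dots\ extending to all of $V_s\times V_t$ by bilinearity'' does not define a product: real bilinearity applied to the single value $[(1,0,0)_0,(1,0,0)_1]$ determines only the products of real multiples of those two vectors, a one-dimensional sliver of the $36$-dimensional space $V_0\otimes V_1$; it says nothing about $[(0,1,0)_0,(1,0,0)_1]$, nor even about $[(\omega,0,0)_0,(1,0,0)_1]$, since the product is not complex-bilinear. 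The values on the rest of each block must be manufactured by the action of $E$ itself (this is exactly what the table in the proof of Lemma~\ref{step1} does), and well-definedness of that step is precisely the question of whether the elements of $E$ stabilizing the seed pair of vectors (the relevant conjugate of $e$, together with the elements of $D$ acting trivially on $V_0\oplus V_1$) fix the seed value --- the paper's ``it remains to check that a conjugate of $e$ which fixes the two factors also fixes the product,'' already done in Lemma~\ref{unique} for the first seed. Your criterion ``$g[u,v]=[gu,gv]$ for all $g\in E$, $u\in V_0$, $v\in V_1$'' presupposes a product on all of $V_0\times V_1$ that your stated construction has not yet supplied; your third paragraph gestures at the repair via the Lemma~\ref{step1} table, but the logical order needs to be fixed.

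Second, your test for the conjugates of $d$ --- that ``the product of the scalars on $V_0$ and $V_1$ matches the product of the scalars on $V_3$ and $V_9$'' --- is not the right condition and fails for the elements actually involved: the element of $D$ acting as $(\omega,1)$ on $(V_0,V_1)$ acts as $(\omega,\ombar)$ on $(V_3,V_9)$, while the one acting as $(1,\omega)$ acts as $(\omega,\omega)$ (these are the quadruples $(\omega,1,\omega,\ombar)$ and $(1,\omega,\omega,\omega)$ listed in Section~2). Because the product is only $\mathbb R$-bilinear, no identity equating products of complex scalars is even meaningful here; the correct bookkeeping is componentwise, exactly as spelled out in the proof of Lemma~\ref{step1} (multiplying $v_0$ by $\omega$ and fixing $v_1$ forces multiplication of the $V_3$-part by $\omega$ and the $V_9$-part by $\ombar$, etc.), and this rule is what \emph{defines} the values at $\omega$-multiples rather than being a condition one verifies numerically. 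With these two points repaired, your argument becomes the paper's proof, with the consistency check done in bulk over all six generators of $E$ instead of the paper's streamlined observation that only the generator fixing both seed factors needs checking.
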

\begin{proof}
Most of the group $H\cong 3^{3+3}{:}13{:}3$ is used to obtain new values for the product
from the two given. Indeed, the conjugates of $d$ give the multiples by $\omega$,
and two conjugates of $e$ permute the three coordinates in the two factors.
It remains to check that a conjugate of $e$ which fixes the two factors
also fixes the product. In the case of $[(1,0,0)_0,(1,0,0)_1]$ this fact was
used to determine the product in the first place, so has already been checked.
The other case is similarly easy.
Finally, the quotient $13{:}3$ acts regularly on two orbits each of $39$
unordered pairs, giving the full product on $L$.
\end{proof}

To assist with computations, we provide a fuller version of the multiplication table
in Table~\ref{E6mult}. This must be used in combination with the action of $D$,
which shows how to compute the products of vectors with coordinates
$\omega$ or $\ombar$.

\begin{table}
$$\begin{array}{c|ccc|}

&(1,0,0)_1&(0,1,0)_1&(0,0,1)_1\cr\hline
(1,0,0)_0 &(\ombar,1,1)_3-(1,\ombar,\ombar)_9&
(\ombar,1,1)_3-(1,\ombar,\ombar)_9&(\ombar,1,1)_3-(1,\ombar,\ombar)_9\cr
(0,1,0)_0&(1,\ombar,1)_3-(\omega,\omega,\ombar)_9&
(\ombar,\omega,\ombar)_3-(1,1,\omega)_9&
(\omega,1,\omega)_3-(\ombar,\ombar,1)_9\cr
(0,0,1)_0&(1,1,\ombar)_3-(\omega,\ombar,\omega)_9&
(\omega,\omega,1)_3-(\ombar,1,\ombar)_9&
(\ombar,\ombar,\omega)_3-(1,\omega,1)_9\cr\hline
\cr

&(1,0,0)_3&(0,1,0)_3&(0,0,1)_3\cr\hline
(1,0,0)_0 &(\omega,1,1)_9-(\omega,\omega,\omega)_1&
(1,\ombar,\ombar)_9-(1,1,1)_1&(1,\ombar,\ombar)_9-(1,1,1)_1\cr
(0,1,0)_0&(\omega,\omega,\ombar)_9-(1,\omega,\ombar)_1&
(\ombar,\ombar,1)_9-(\omega,\ombar,1)_1&
(\omega,\omega,\ombar)_9-(1,\omega,\ombar)_1\cr
(0,0,1)_0&(\omega,\ombar,\omega)_9-(1,\ombar,\omega)_1&
(\omega,\ombar,\omega)_9-(1,\ombar,\omega)_1&
(\ombar,1,\ombar)_9-(\omega,1,\ombar)_1\cr\hline
\cr

&(1,0,0)_9&(0,1,0)_9&(0,0,1)_9\cr\hline
(1,0,0)_0 &(1,1,1)_1-(\ombar,1,1)_3&
(\omega,\omega,\omega)_1-(1,\omega,\omega)_3&
(\omega,\omega,\omega)_1-(1,\omega,\omega)_3\cr
(0,1,0)_0&(\ombar,1,\omega)_1-(\ombar,\omega,\ombar)_3&
(\ombar,1,\omega)_1-(\ombar,\omega,\ombar)_3&
(\omega,\ombar,1)_1-(\omega,1,\omega)_3\cr
(0,0,1)_0&(\ombar,\omega,1)_1-(\ombar,\ombar,\omega)_3&
(\omega,1,\ombar)_1-(\omega,\omega,1)_3&
(\ombar,\omega,1)_1-(\ombar,\ombar,\omega)_3\cr\hline
\cr

&(1,0,0)_3&(0,1,0)_3&(0,0,1)_3\cr\hline
(1,0,0)_1 &(\omega,1,1)_0-(\ombar,\ombar,\ombar)_9&
(1,\omega,1)_0-(\omega,1,\ombar)_9&(1,1,\omega)_0-(\omega,\ombar,1)_9\cr
(0,1,0)_1&(\omega,\omega,\ombar)_0-(1,\ombar,\omega)_9&
(1,\ombar,\ombar)_0-(\omega,\ombar,1)_9&
(1,\omega,1)_0-(1,1,1)_9\cr
(0,0,1)_1&(\omega,\ombar,\omega)_0-(1,\omega,\ombar)_9&
(1,1,\omega)_0-(1,1,1)_9&
(1,\ombar,\ombar)_0-(\omega,1,\ombar)_9\cr\hline
\cr

&(1,0,0)_9&(0,1,0)_9&(0,0,1)_9\cr\hline
(1,0,0)_3 &(\ombar,\ombar,\ombar)_0-(\ombar,1,1)_1&
(1,\omega,\ombar)_0-(\ombar,\omega,\ombar)_1&
(1,\ombar,\omega)_0-(\ombar,\ombar,\omega)_1\cr
(0,1,0)_3&(\omega,\omega,\omega)_0-(1,\ombar,1)_1&
(1,\omega,\ombar)_0-(\omega,\omega,1)_1&
(\omega,1,\ombar)_0-(\ombar,1,1)_1\cr
(0,0,1)_3&(\omega,\omega,\omega)_0-(1,1,\ombar)_1&
(\omega,\ombar,1)_0-(\ombar,1,1)_1&
(1,\ombar,\omega)_0-(\omega,1,\omega)_1\cr\hline
\cr

&(1,0,0)_1&(0,1,0)_1&(0,0,1)_1\cr\hline
(1,0,0)_9 &(1,\omega,\omega)_0-(\ombar,\omega,\omega)_3&
(1,1,\ombar)_0-(1,\omega,1)_3&(1,\ombar,1)_0-(1,1,\omega)_3\cr
(0,1,0)_9&(\ombar,\omega,\ombar)_0-(\ombar,1,\ombar)_3&
(\ombar,1,1)_0-(\ombar,\ombar,1)_3&
(\ombar,\ombar,\omega)_0-(\omega,1,1)_3\cr
(0,0,1)_9&(\ombar,\ombar,\omega)_0-(\ombar,\ombar,1)_3&
(\ombar,\omega,\ombar)_0-(\omega,1,1)_3&
(\ombar,1,1)_0-(\ombar,1,\ombar)_3\cr\hline
\end{array}$$
\caption{{\label{E6mult}}The Lie bracket on $E_6$}
\end{table}

\begin{lemma}\label{step4}
The product defined in Lemma~\ref{step3} is invariant under $G$.
\end{lemma}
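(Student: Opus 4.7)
Since $G=\langle H,c\rangle$ and Lemma~\ref{step3} provides $H$-invariance, the task reduces to verifying $c[u,v]=[cu,cv]$ for all $u,v\in V$. By bilinearity it suffices to check the identity on pairs of basis vectors $u\in V_s$, $v\in V_t$.

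A delicate point is that $c$ does not normalise $H$. Writing $c[hu,hv]=ch[u,v]$ (by $H$-equivariance) and, assuming $c$-invariance at $(u,v)$, $[chu,chv]=[chc^{-1}\cdot cu,chc^{-1}\cdot cv]=chc^{-1}[cu,cv]=ch[u,v]$, one sees that $c$-invariance at $(u,v)$ propagates to $c$-invariance at $(hu,hv)$ only when $chc^{-1}\in H$, i.e.\ when $h\in H\cap c^{-1}Hc$. Since $c\notin\langle a,b\rangle$ (the latter has odd order $39$) and $\langle a\rangle$ is a self-normalising Sylow $13$-subgroup of the simple group $\LL_3(3)$, the intersection $\langle a,b\rangle\cap c^{-1}\langle a,b\rangle c$ has order at most $3$, so the propagating subgroup is essentially just $E$ (times a possible factor of order $3$). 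Hence a handful of pair-types must be checked directly, rather than the single representative that would suffice under full $G$-invariance.

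Two of these checks are easy. For $[(1,0,0)_0,(1,0,0)_1]$: the element $c$ fixes $V_0$ and $V_1$, negating each of $(1,0,0)_0$ and $(1,0,0)_1$, while the explicit swap $V_3\leftrightarrow V_9$ sends $(\ombar,1,1)_3-(1,\ombar,\ombar)_9$ back to itself, so both sides of the identity agree with $(\ombar,1,1)_3-(1,\ombar,\ombar)_9$. For $[(1,0,0)_1,(1,0,0)_9]$: the required identity is exactly the consistency relation used in the derivation of the second base value in the proof of Lemma~\ref{step2}, so it holds by construction.

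The main obstacle is the remaining checks, on pair-types involving the spaces $V_2,V_5,V_6,V_7,V_8,V_E$ where $c$ acts via the unitary matrices $M_i$. For each representative one expands both $c[u,v]$ and $[cu,cv]$ using the multiplication table (Table~\ref{E6mult}) together with the $M_i$, and verifies equality term by term. The key algebraic fact enabling the cancellations is the unitarity relation $\overline{M_i^{-1}}=M_i^\top$; what remains is bookkeeping of signs, cube-roots of unity and coordinate permutations, which is tedious but routine. Once these representative checks have been carried out, $H$-equivariance extends $c$-invariance across each $(H\cap c^{-1}Hc)$-orbit, and the combined action covers all basis pairs, yielding full $G$-invariance.
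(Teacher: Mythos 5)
Your structural reduction is essentially the paper's: since $c$-invariance propagates from a pair $(u,v)$ to $(hu,hv)$ exactly when $h\in H\cap c^{-1}Hc$, one only needs to check representatives of the orbits of that intersection. Your Sylow-$13$ argument correctly bounds the intersection, but you leave it at ``$E$ times a possible factor of order $3$''; the paper nails this down: $c$ inverts $b$ modulo $E$, so it normalizes the group $3^{3+3}{:}3=\langle E,b\rangle$, and the orbits of this group on basis pairs number exactly $26$, represented by $[(1,0,0)_t,(1,0,0)_{t+1}]$ and $[(1,0,0)_t,(1,0,0)_{t+2}]$ for $t\in\FF_{13}$. Without committing to the factor $\langle b\rangle$ and without enumerating representatives, ``a handful of pair-types'' is not yet a well-defined list of checks.

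Beyond that vagueness there are two concrete gaps. First, your second ``easy check'' is mis-attributed. The computation in Lemma~\ref{step2} applies $c$ to $[(1,0,0)_8,(1,0,0)_9]$ and lands in $V_8\times V_3$; run in reverse it verifies $c$-equivariance on the orbit of the pair $\{8,9\}$ (the paper's $89/9E$ case), not on the pair $\{1,9\}$. The orbit of $\{1,9\}$ also contains $\{1,3\}$ and $\{3,9\}$, and the required identity there compares the $19$-values with the $13$-values, both obtained from the second base value by different elements of $H$; that compatibility is not automatic ``by construction'' and must be verified (it is the paper's case $13$, checked against Table~\ref{E6mult}). Second, the substance of this lemma is the case-by-case verification, and you actually carry out only the $01$ case: the other monomial representatives $T0,34,9X,XT,T1,13$ are not covered by your two checks and are not mentioned, and the non-monomial cases are dismissed as ``tedious but routine'' with only the remark that unitarity of the $M_i$ drives the cancellations --- that is a plan for a computation, not the computation. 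The paper also leaves $14$ of the $26$ cases to the reader, but only after exhibiting the orbit count, explicit representatives, the reduction from $19$ to $14$ via $c^2=1$, and seven fully worked cases; as written, your argument establishes $c$-invariance on essentially one orbit out of $26$.
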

\begin{proof}
It suffices now to prove that this product is 
invariant
under $c$. Since $c$ normalizes the group $3^{3+3}{:}3$ generated by $b$ together
with conjugates of $d$ and $e$, it suffices to check the product on one pair of
basis vectors in each orbit under the latter group. There are $26$ such orbits,
represented by 
\begin{eqnarray*}
&&[(1,0,0)_t,(1,0,0)_{t+1}],\cr
&&[(1,0,0)_t,(1,0,0)_{t+2}],
\end{eqnarray*} 
for each $t\in \FF_{13}$.

The easiest cases are those where $c$ acts monomially, that is on the
coordinates $t=0,1,3,4,9,X,T$. There are seven such cases, namely $T0$, $01$, $34$ and $9X$
of the form $t,t+1$, and $XT$, $T1$ and $13$ of the form $t,t+2$.
In the $01$ case we have
$$[(1,0,0)_0,(1,0,0)_1]=(\ombar,1,1)_3-(1,\ombar,\ombar)_9$$
and under $c$ the left-hand-side is fixed (since both factors are negated),
while on the right-hand-side the two terms are swapped. Thus this instance of
the product is preserved by $c$, as required. In the $T0$ case we have
$$[(1,0,0)_T,(1,0,0)_0]=(\ombar,1,1)_2-(1,\ombar,\ombar)_8,$$
and this time the left-hand-side is negated by $c$, while the right-hand-side maps to
$$(\omega,1,1)_2M_1-(1,\omega,\omega)_8M_2=-(\ombar,1,1)_2+(1,\ombar,\ombar)_8$$
as required, since $2+\ombar=\ombar\theta$ and $1+2\omega=\theta$.
Now consider the case $9X$: we have
$$[(1,0,0)_9,(1,0,0)_X]=(\ombar,1,1)_T-(1,\ombar,\ombar)_5$$
in which the left-hand-side maps under $c$ to $-[(\ombar,0,0)_3,(1,0,0)_4]$ and the
right-hand side maps to
$$(\omega,1,1)_T-(1,\omega,\omega)_6M_3=(\omega,1,1)_T-(\omega,\ombar,\ombar)_6$$
which checks out with Table~\ref{E6mult} after applying a suitable element of $D$.

In the case $13$ we have
$$[(1,0,0)_1,(1,0,0)_3]=(\omega,1,1)_0-(\ombar,\ombar,\ombar)_9$$
and the left-hand-side is mapped by $c$ to $[(1,0,0)_1,(\omega,0,0)_9]$, while
the right-hand-side is mapped to $-(\ombar,1,1)_0+(\omega,1,1)_3$,
which again checks out with Table~\ref{E6mult}.
Similarly the right-hand side of
$$[(1,0,0)_X,(1,0,0)_T]=(\omega,1,1)_9-(\ombar,\ombar,\ombar)_5$$
maps to 
$$-(1,\omega,\omega)_3-(\omega,\omega,\omega)_6M_3 =-(1,\omega,\omega)_3
+(\ombar,\omega,\omega)_6$$
which is the value of $[(1,0,0)_4,(1,0,0)_T]$ as required.
In the case $T1$ we have
$$[(1,0,0)_T,(1,0,0)_1]=(\omega,1,1)_E-(\ombar,\ombar,\ombar)_7,$$
and the second term on the right-hand-side is mapped by $c$ to
$$-(\omega,\omega,\omega)_EM_4=-(\omega,1,1)_E.$$ 
Since $c$ has order $2$
it also maps the first term on the right-hand-side to the negative of the second, and
so $c$ preserves this instance of the product also.

The other 19 of the 26 calculations are slightly more awkward since $c$
no longer acts monomially on the left-hand-side, and are left as exercises for the
reader. The calculations can be reduced from $19$ cases to $14$ by using the fact that
$c$ has order $2$.
These are the cases $$12/45/56/78/XE/02/57/79/E0$$ and one from each of the
pairs $$23/35, 24/46, 67/68, 89/9E, 8X/ET.$$
Indeed, one of these calculations was essentially done in Lemma~\ref{step2} above,
where the equivalence of the following was shown:
\begin{eqnarray*}
[(1,0,0)_8,(1,0,0)_9] &=& (\ombar,1,1)_E-(1,\ombar,\ombar)_4\cr
[(1,0,0)_8,(1,0,0)_3]&=& -(1,\omega,\omega)_7+(\ombar,\omega,\omega)_X
\end{eqnarray*}
Applying $b$ to the second equation we obtain 
$$[(\omega,0,0)_E,(\omega,0,0)_9]=-(\omega,\ombar,\ombar)_8+(1,\ombar,\ombar)_4$$
and thence
$$[(1,0,0)_9,(1,0,0)_E]=(\omega,\ombar,\ombar)_8-(\ombar,\omega,\omega)_4.$$
\end{proof}

To summarise the results of this section so far, we have now proved the following.
\begin{theorem}\label{producttheorem}
Up to scalar multiplication, there is a 
unique bilinear product on the $78$-space $V$ which is invariant under $G$.
This product is defined by
\begin{eqnarray*}
[(1,0,0)_0,(1,0,0)_1]&=&(\ombar,1,1)_3-(1,\ombar,\ombar)_9, 
\end{eqnarray*}
and 
 is anti-symmetric.
\end{theorem}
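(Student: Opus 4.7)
The plan is to assemble the four preceding lemmas into the stated theorem. The uniqueness clause (up to a real scalar) is exactly the conclusion of Lemma~\ref{unique}: the $2$-transitivity of $G$ on the $13$ spaces $V_t$, the vanishing on $V_0\times V_0$ forced by the diagonal conjugates of $d$, and the residual symmetry enforced by $e^a$, $c^a$ and $c$ cut the ambient space of $G$-invariant bilinear products down to at most one real line. The anti-symmetry clause is exactly Lemma~\ref{step1}, obtained by comparing $[u,v]$ and $[v,u]$ on a particular pair of vectors to which a suitable conjugate of $c$ has been applied and verifying that the second is the negative of the first; uniqueness then propagates anti-symmetry to the entire product.

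For existence, I would exhibit an actual product taking the displayed seed value. Lemma~\ref{step3} constructs an $H$-invariant anti-symmetric product pinned down by the two values $[(1,0,0)_0,(1,0,0)_1]$ and $[(1,0,0)_1,(1,0,0)_9]$, and Lemma~\ref{step4} promotes $H$-invariance to full $G$-invariance by checking compatibility with the remaining generator $c$. The apparent need for two seed values in Lemma~\ref{step3}, rather than one, is not in tension with the theorem: Lemma~\ref{step2} shows that once $G$-invariance is assumed, the second value is forced by the first (via $c$ acting on $[(1,0,0)_8,(1,0,0)_9]$, followed by $e^{a^{-3}}$ and summation on $(\theta,0,0)_1$), so that in the end the whole multiplication is determined by the single displayed equation together with $G$-equivariance, anti-symmetry and bilinearity.

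The main substantive obstacle lies in the proof of Lemma~\ref{step4}, where $c$ acts non-monomially through the unitary matrices $M_1,\ldots,M_4$, and compatibility must be verified across the $26$ orbit-representative pairs $[(1,0,0)_t,(1,0,0)_{t+1}]$ and $[(1,0,0)_t,(1,0,0)_{t+2}]$ under the subgroup $\langle b,d,e\rangle$. The seven monomial cases reduce to inspection, while the remaining nineteen require matching the images of non-monomial expressions against the entries of Table~\ref{E6mult}, with identities such as $2+\ombar=\ombar\theta$ and $1+2\omega=\theta$ doing the arithmetic work. Since all of these verifications are already carried out in Lemmas~\ref{unique}--\ref{step4}, the theorem follows simply by collating their conclusions.
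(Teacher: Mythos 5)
Your proposal is correct and follows essentially the same route as the paper: Theorem~\ref{producttheorem} is stated there precisely as a summary of Lemmas~\ref{unique}--\ref{step4}, with uniqueness from Lemma~\ref{unique}, anti-symmetry from Lemma~\ref{step1}, and existence from Lemmas~\ref{step3} and~\ref{step4}, the second seed value being forced by the first as in Lemma~\ref{step2}. Your reconciliation of the two-value seed in Lemma~\ref{step3} with the single displayed equation in the theorem is exactly the point the paper relies on, so nothing further is needed.
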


The only remaining serious calculation is to verify that our product
satisfies the Jacobi identity,
$$[[x,y],z]+[[y,z],x]+[[z,x],y]=0.$$
By linearity and anti-symmetry, and the symmetry of the formula under cyclically
permuting $x,y,z$,  it suffices to check this for unordered triples $\{x,y,z\}$ of
distinct basis vectors. 
\begin{proposition}\label{Jacobi}
The product defined in Lemma~\ref{step3} satisfies the Jacobi identity.
\end{proposition}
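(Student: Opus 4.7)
The plan is to reduce the Jacobi identity to a small finite set of cases using $G$-invariance and the symmetries of the identity, and then to verify each reduced case by direct calculation from Table~\ref{E6mult}. By bilinearity, antisymmetry and the cyclic symmetry of the Jacobi form, it suffices to verify the identity on unordered triples $\{x,y,z\}$ of distinct basis vectors, with $x\in V_s$, $y\in V_t$, $z\in V_u$. Since $G$ preserves the product, these triples may be replaced by representatives under the $G$-action on triples of subspaces. The group $\SL_3(3)$ acts $2$-transitively on the $13$ points of the projective plane, and the line stabiliser permutes the four points on that line as $S_4$, so unordered triples $\{s,t,u\}$ of distinct indices fall into just two orbits: collinear triples, with representative $\{0,1,3\}$ on the line $\{0,1,3,9\}$, and triangles, with representative $\{0,1,2\}$. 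The remaining possibilities are triples with repeated indices, which reduce to $\{s,s,u\}$ and $\{s,s,s\}$.

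For $\{s,s,s\}$ there is nothing to check, since $[V_s,V_s]=0$. For $\{s,s,u\}$, the first bracket vanishes and Jacobi reduces to $[[y,z],x]+[[z,x],y]=0$ with $x,y\in V_s$ and $z\in V_u$; the stabiliser of the pair $\{s,u\}$ in $G$, together with the inner action of $D$ on $V_s$, reduces this to a small handful of identities readable from Table~\ref{E6mult}. For the collinear case $\{0,1,3\}$, every iterated bracket lies in $V_0+V_1+V_3+V_9$, so one sorts the six terms of the Jacobi sum by target subspace $V_v$ (for $v\in\{0,1,3,9\}$) and verifies cancellation in each; the stabiliser of the triple $\{0,1,3\}$ in $G$ and the inner action of $D$ cut the number of independent verifications down to a manageable list.

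The main obstacle is the triangle case $\{0,1,2\}$. Here the three relevant lines $\{0,1,3,9\}$, $\{1,2,4,X\}$ and $\{0,2,8,T\}$ share only the triangle vertices, so the first brackets $[V_s,V_t]$ land across six distinct subspaces $V_3,V_9,V_4,V_X,V_8,V_T$; the outer brackets with $V_u$ then redistribute these into further subspaces determined by yet more lines of the plane. One sorts the resulting contributions by target subspace and verifies cancellation in each; by invariance under the stabiliser of $\{0,1,2\}$ in $G$ (of order $24$, acting on $\{0,1,2\}$ as $S_3$) and under $D$, only a small number of targets up to equivalence need be checked, and each such check is a computation with coefficients drawn from Table~\ref{E6mult} together with the action of $D$. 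Once this is done, the full Jacobi identity propagates to all triples by $G$-invariance and linearity.
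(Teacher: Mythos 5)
Your reduction scheme is essentially the one the paper uses (orbits of index triples under $\SL_3(3)$: repeated indices, collinear triples, triangles, followed by explicit checks from Table~\ref{E6mult}), and everything you actually commit to is correct — the two orbits on distinct triples, the vanishing of $[V_s,V_s]$, the confinement of the collinear case to $V_0+V_1+V_3+V_9$, and the six intermediate subspaces $V_3,V_9,V_4,V_X,V_8,V_T$ in the triangle case. But as written this is a plan, not a proof, and the gap is precisely where the content of the proposition lies. First, reducing the index triple is not enough: within a fixed orbit of subspace triples you must still reduce the $9\times 9\times 9$ triples of spanning vectors to explicit representatives using the stabiliser of the subspaces (which contains $3^{3+3}$, not just the order-$24$ triangle stabiliser in $\LL_3(3)$ that you cite), and this reduction is not uniform: for the non-collinear orbit a single representative $\bigl((1,0,0)_0,(1,0,0)_1,(1,0,0)_2\bigr)$ suffices, whereas for the collinear and repeated-index orbits one representative does \emph{not} suffice — one needs two choices of the third vector (e.g.\ $(1,0,0)_3$ or $(\omega,0,0)_3$, and $(\omega,0,0)_0$ or $(0,1,0)_0$), because ``multiplication by $\omega$'' on one factor is implemented by an element of $D$ that moves the other coordinates and so cannot be absorbed by real bilinearity. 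Your phrase ``a manageable list'' passes over exactly this point, so the case list is never pinned down.

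Second, and more importantly, no cancellation is ever verified. The serious calculation is the triangle case: one must actually compute the six outer products (e.g.\ $[(\ombar,1,1)_3,(1,0,0)_2]=-\theta(\omega,\ombar,\ombar)_5+\theta(\ombar,\omega,\omega)_E$ and its five companions, landing in $V_5,V_6,V_7,V_E$) and check that all terms cancel; the collinear and repeated-index cases, while easier, also require explicit checks. Your appeal to the triangle stabiliser to cut the four target subspaces ``up to equivalence'' would itself need justification, since the chosen vector triple must be compatible with the stabilising elements used. Until at least the non-collinear cancellation is carried out (or a genuine argument replaces it), the proposition has not been proved — you have correctly organised the verification but not performed it.
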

\begin{proof}
We need to check that
$$[[x_r,y_s],z_t] + [[y_s,z_t],x_r] + [[z_t,x_r],y_s]=0$$
for suitable choices of (linearly independent)
vectors $x_r,y_s,z_t$.
If $r,s,t$ are not collinear, then we may assume
\begin{eqnarray*}
x_r&=&(1,0,0)_0,\cr 
y_s&=&(1,0,0)_1,\cr 
z_t&=&(1,0,0)_2.
\end{eqnarray*}
If $r,s,t$ are collinear and distinct, then we may assume
\begin{eqnarray*}
x_r&=&(1,0,0)_0,\cr 
y_s&=&(1,0,0)_1,\mbox{ and}\cr 
z_t&=&(1,0,0)_3\mbox{ or }(\omega,0,0)_3. 
\end{eqnarray*}
If $r,s,t$ are collinear and two of them are equal,
we may assume that 
\begin{eqnarray*}
x_r&=&(1,0,0)_0,\cr 
y_s&=&(1,0,0)_1\mbox{ and }\cr
z_t&=&(\omega,0,0)_0\mbox{ or }(0,1,0)_0.
\end{eqnarray*} 

Thus we have five cases to check.
The four cases when $r,s,t$ are collinear are relatively easy, and left as
exercises. The hard case is when they are not collinear, and we calculate
as follows.
\begin{eqnarray*}
[[(1,0,0)_0,(1,0,0)_1],(1,0,0)_2]\cr + [[(1,0,0)_1,(1,0,0)_2],(1,0,0)_0]\cr
+ [[(1,0,0)_2,(1,0,0)_0],(1,0,0)_1] &=&
[(\ombar,1,1)_3,(1,0,0)_2] - [(1,\ombar,\ombar)_9,(1,0,0)_2]\cr
&&{}+
[(\ombar,1,1)_4,(1,0,0)_0] - [(1,\ombar,\ombar)_X,(1,0,0)_0]\cr
&&{}- [(\omega,1,1)_T,(1,0,0)_1] + [(\ombar,\ombar,\ombar)_8,(1,0,0)_1]
\end{eqnarray*}
Next we calculate the individual terms on the right-hand side
as follows (details of the 
calculations are omitted).
\begin{eqnarray*}
[(\ombar,1,1)_3,(1,0,0)_2]&=& -\theta(\omega,\ombar,\ombar)_5+\theta(\ombar,\omega,\omega)_E\cr
[(\ombar,1,1)_4,(1,0,0)_0]&=& 3(\ombar,0,0)_5+\theta(1,\ombar,\ombar)_7\cr
[(1,\ombar,\ombar)_X,(1,0,0)_0]&=&-\theta(\omega,\omega,\omega)_6 -3(\omega,0,0)_E\cr
[(1,0,0)_2,(1,\ombar,\ombar)_9]&=& \theta(\ombar,1,1)_6+3(\ombar,0,0)_7\cr
[(\omega,1,1)_T,(1,0,0)_1]&=& \theta(1,\omega,\omega)_E+\theta(\omega,\ombar,\ombar)_7\cr
[(\ombar,\ombar,\ombar)_8,(1,0,0)_1]&=& \theta(1,\ombar,\ombar)_5+\theta(1,\ombar,\ombar)_6
\end{eqnarray*}
Finally we combine these results and find that all the terms cancel out, giving
$0$ as required.
\end{proof}

\section{Identification of the algebra with $E_6$}
It is easy to see that the Lie product is not identically zero on any subspace
properly containing $V_0$, and therefore $V_0$ is a Cartan subalgebra.
The stabilizer of $V_0$ in $G$ is a group of shape $3^{3+3}{:}3^2{:}2S_4$
which acts on $V_0$ as $3^{1+2}{:}2S_4$. Since this group has no faithful 
complex representation
of degree less than $6$, it acts absolutely irreducibly on $V_0$.
Therefore the Lie algebra is simple, so by the classification theorem
it is $E_6$. Since $G$ acts irreducibly, the only invariant quadratic forms
are (positive or negative) definite. In particular the Killing form is negative
definite, so the algebra is the compact real form of $E_6$.

An alternative proof may be obtained from first principles
by extending the field to $\mathbb C$ and
explicitly diagonalizing the action of $V_0$ by multiplication on $L$,
and thereby obtaining the root spaces. These may then be explicitly identified
with the $72$ roots of the $E_6$ root system, and the products of the root
vectors explicitly calculated. One would then see that the complexification
of the algebra is the same
as the usual complex Lie algebra of type $E_6$.

As a first step, we compute the eigenspaces of the action of $V_0$. We find that
one of them is spanned by
$$v=(1,1,1)_1+(\omega,\ombar,\ombar)_3+(\omega,1,1)_9.$$
To verify this we compute
\begin{eqnarray*}
[(1,0,0)_0, v]&=&3(\ombar,1,1)_3 - 3(1,\ombar,\ombar)_9
+3(\ombar,\omega,\omega)_9-3(\ombar,\ombar,\ombar)_1\cr &&\qquad
+3(\omega,\omega,\omega)_1-3(1,\omega,\omega)_3\cr
&=& 3\theta v\cr
[(\omega,0,0)_0,v]&=& 3(1,\omega,\omega)_3-3(\ombar,\omega,\omega)_9
+ 3(1,\ombar,\ombar)_9-3(\omega,\omega,\omega)_1\cr
&&\qquad + 3(\ombar,\ombar,\ombar)_1-3(\ombar,1,1)_3\cr
&=& -3\theta v
\end{eqnarray*}
and
$$[(0,1,0)_0,v]=[(0,\omega,0)_0,v]=[(0,0,1)_0,v]=[(0,0,\omega)_0,v]=0.$$
Moreover, we see that this complex eigenspace corresponds to the pair of roots
$\pm(1-\omega,0,0)_0=\pm(\ombar\theta,0,0)_0$.
Applying suitable elements of $D$ we obtain the correspondence
\begin{eqnarray*}
\pm(\theta,0,0)&\leftrightarrow& \langle
(1,1,1)_1+(\ombar,1,1)_3+(1,\ombar,\ombar)_9,\cr
&&\qquad
(\omega,\omega,\omega)_1+(1,\omega,\omega)_3+(\omega,1,1)_9\rangle.
\end{eqnarray*}
Notice that `scalar multiplication' is always interpreted as applying a suitable
element of $D$, so is not always the same as scalar multiplication by $\omega$.
 The other orbits of $E$ give rise to the following correspondences:
\begin{eqnarray*}
\pm(1,1,1)_0&\leftrightarrow&\langle (\theta,0,0)_T+(\ombar,1,1)_2-(1,\ombar,\ombar)_8,\cr
&&\qquad(\omega\theta,0,0)_T+(1,\omega,\omega)_2-(\ombar,\omega,\omega)_8\rangle\cr
\pm(1,\omega,\omega)_0&\leftrightarrow&
\langle (\theta,0,0)_X-(\omega,\omega,\omega)_E+(\omega,1,1)_6,\cr
&&\qquad(\omega\theta,0,0)_X-(1,1,1)_E+(\ombar,\omega,\omega)_6\rangle\cr
\pm(1,\ombar,\ombar)_0&\leftrightarrow&
\langle (\theta,0,0)_4+(1,1,1)_5-(\ombar,1,1)_7,\cr
&&\qquad(\omega\theta,0,0)_4+(\omega,\omega,\omega)_5-(\omega,\ombar,\ombar)_7\rangle
\end{eqnarray*}

Now we can use elements of the stabiliser of $V_0$ in $3^{3+3}{:}\SL_3(3)$
to obtain all the other root spaces, labelled with the corresponding roots.
 The pointwise stabiliser of $V_0$ is an elementary abelian group
of order $3^5$, generated by 
$$d,d^a,e,c^{a^{-1}}cc^a,(c^{a^{-2}}c^{a^5}c^{a^{-2}}c)^2b.$$ 
It follows (or one can check directly)
that $v$ is an eigenvector for this group.
The elements $e^a, e^{a_2}$ then map the given eigenspace to the nine eigenspaces
which lie inside $V_1+V_3+V_9$. The eigenspaces lying in the other `lines' containing $0$ can
be computed by applying other conjugates of $c$ which fix the point $0$. 

Recall that our $39$-dimensional complex notation denotes a
$78$-dimensional real vector space.
In order to find a Chevalley basis, of course, one needs to
extend the scalars to $\mathbb C$ (without confusing the real vector $\omega$ with
the complex scalar $e^{2\pi i/3}$). Then each of our `eigenspaces' becomes
a $2$-dimensional space, in which one can distinguish two root vectors, corresponding
to a root and its negative.

\section{The subalgebra of type $F_4$}
The subspace $W_t$ of $V_t$ consisting of the vectors $(x,y,y)_t$ has (real)
dimension $4$. The direct sum $W$ of the $W_t$ is a space of dimension $52$, which is
easily seen to be invariant under the action of $a,b,c,d$. These elements in fact generate
a symmetry group of shape $3^3{:}\LL_3(3)$.

This group induces on each $W_t$ a group of shape $(3\times 2A_4){:}2$, which
acts irreducibly. Using the symmetry group it is not hard 
to show that $W$ is closed under the Lie product.
Hence $W$ is a simple Lie algebra of rank $4$ and dimension $52$ and can only be the
compact real form of $F_4$.

The short roots of the $F_4$ root system may be taken as the $24$ vectors
of the form
\begin{eqnarray*}
&&\pm \omega^n(\theta,0,0),\cr
&& \pm \omega^n(1,1,1),\cr 
&& \pm\omega^n(1,\omega,\omega),\cr
&& \pm\omega^n(1,\ombar,\ombar).
\end{eqnarray*} 
We may label these vectors by unit quaternions
by defining 
\begin{eqnarray*}
1&=&-(\theta,0,0),\cr 
i&=&(1,1,1),\cr 
j&=&(1,\omega,\omega),\cr
k&=&(1,\ombar,\ombar).
\end{eqnarray*}
and identifying (left-)multiplication by the complex number $\omega$ with left-multiplication
by the quaternion $\omega=(-1+i+j+k)/2$. Let $q_t$ denote the quaternion $q$
in the space $W_t$.

With this notation, 
the compact real form of the Lie algebra of type $F_4$ becomes a 
$13$-dimensional object over quaternions. It is of course not linear,
but the quaternions do provide a compact notation both for the multiplication and
for the action of certain automorphisms.
For example,
$d$ becomes left-quaternion multiplication by
$$(1,\omega,1,\omega,\omega,\omega,\ombar,\ombar,1,\omega,\ombar,\omega,1)$$
on the $13$ spaces $W_t$. Similarly, the element $b$ becomes $q_t\mapsto q_{3t}\omega$,
that is, the combination of 
{\em right}-quaternion-multiplication by $\omega$ with the 
coordinate permutation
$$(1,3,9)(2,6,5)(4,T,X)(8,E,7).$$

Similarly, the matrices $M_1$, $M_2$, $M_3$, $M_4$ defined earlier induce
right-multiplication by the quaternions $j\omega$, $i$, $k\omega$, $j$, respectively.
Complex conjugation induces the negative of the automorphism
$*$ which negates $i$ and
swaps $j$ with $-k$. Then $c$ maps 
\begin{eqnarray*}
q_0&\mapsto & q_0^*\cr
q_1&\mapsto & -q_1\cr
q_3&\leftrightarrow& -(q\omega)_9\cr
q_4&\leftrightarrow& -q_X^*\cr
q_T&\mapsto&-q_T^*\cr
q_2&\mapsto& -(q^*j\omega)_2\cr
q_8&\mapsto& -(q^*i)_8\cr
q_5&\leftrightarrow&-(q^*k\omega)_6\cr
q_7&\leftrightarrow&-(q^*j)_E
\end{eqnarray*}
For convenience, we note also
\begin{eqnarray*}
q_9&\mapsto& -(q\ombar)_3\cr
q_X&\mapsto& -q^*_4\cr
q_6&\mapsto& -(q^*i\omega)_5\cr
q_E&\mapsto& -(q^*k)_7
\end{eqnarray*}

The Lie product suitably scaled (in fact, this is the previous
product divided by $-3$) is written out in more
detail in Table~\ref{F4multtable}. 
All products of roots in the $W_t$ can be obtained from this table by applying
elements of the group $D\cong 3^3$.
Indeed, this table may be useful
for calculating the product in $E_6$ as well,
by applying elements of $E\cong 3^{3+3}$ to the entries.

\begin{remark}{\rm
One might expect that a result similar to Theorem~\ref{producttheorem}
should hold also for $F_4$. That is, one might conjecture that there is a unique
algebra invariant under the appropriate $52$-dimensional representation of
$3^3{:}\LL_3(3)$. However, this is not the case.}
\end{remark}

\begin{table}
$$\begin{array}{c|cccc|}
&\overline{\omega}1_1 & \overline{\omega}i_1 & \overline{\omega}j_1 & \overline{\omega}k_1\cr\hline
\ombar 1_0 & j_3+k_9 & \theta j_3+\theta k_9 & -j_3-k_9 & j_3+k_9\cr
\ombar i_0 & -i_3+j_9 & -j_3+k_9 & 1_3-1_9 & k_3-i_9\cr
\ombar j_0 & k_3+1_9 & -j_3+k_9 & i_3+i_9 & -1_3+j_9\cr
\ombar k_0 & -1_3-i_9 & -j_3+k_9 & -k_3-j_9 & -i_3+1_9\cr\hline
\cr&\overline{\omega}1_3 & \overline{\omega}i_3 & \overline{\omega}j_3 & \overline{\omega}k_3\cr\hline
1_0 & k_9+i_1 & k_9+i_1 & \theta k_9 + \theta i_1 & -k_9-i_1\cr
i_0 & -1_9-j_1 & -j_9+1_1 & -k_9+i_1&-i_9-k_1\cr
j_0 & -j_9+k_1 & i_9-j_1 & -k_9+i_1 & 1_9-1_1\cr
k_0 & i_9+1_1 & -1_9+k_1 & -k_9+i_1 & j_9+j_1\cr\hline
\cr&\overline{\omega}1_9 & \overline{\omega}i_9 & \overline{\omega}j_9 & \overline{\omega}k_9\cr\hline
{\omega} 1_0 & i_1+j_3&-i_1-j_3 & i_1+j_3 & \theta i_1+\theta j_3\cr{\omega} i_0 & j_1+1_3 & k_1+k_3 & -1_1+i_3 &-i_1+j_3\cr
{\omega} j_0 & -1_1-k_3&-j_1-i_3 & -k_1+1_3 & -i_1+j_3\cr
{\omega} k_0 & -k_1+i_3&1_1-1_3&j_1-k_3&-i_1+j_3\cr\hline
\cr&\ombar 1_3&\ombar i_3&\ombar j_3&\ombar k_3\cr\hline
 1_1 & -k_0-i_9&-i_0+j_9&1_0-k_9&j_0-1_9\cr
 i_1 & -1_0-k_9&-1_0-k_9&-\theta 1_0+\theta k_9 & 1_0+k_9\cr
 j_1 & i_0-1_9 & j_0+i_9 & -1_0+k_9 & -k_0+j_9\cr k_1 & -j_0+j_9&-k_0+1_9&1_0-k_9&i_0+i_9\cr\hline
\cr&1_9 & i_9 &j_9 & k_9\cr\hline
\ombar 1_3 & -i_0-j_1&k_0-1_1&-j_0+k_1&1_0-i_1\cr
\ombar i_3 & -k_0+k_1&j_0+j_1&-i_0+1_1&1_0-i_1\cr
\ombar j_3 & -1_0-i_1&1_0+i_1&-1_0-i_1&-\theta 1_0+\theta i_1\cr
\ombar k_3 & j_0-1_1 &-i_0+k_1&k_0+j_1&-1_0+i_1\cr\hline
\cr&{\omega}1_1 & {\omega}i_1 & {\omega}j_1 & {\omega}k_1\cr\hline
\omega 1_9 & -j_0-k_3&1_0-j_3&i_0-1_3&-k_0+i_3\cr
\omega i_9 & k_0-1_3&-1_0+j_3&-j_0+i_3&i_0+k_3\cr
\omega j_9 & -i_0+i_3&1_0-j_3&k_0+k_3&-j_0+1_3\cr
\omega k_9 & -1_0-j_3 &-\theta 1_0 +\theta j_3 &1_0+j_3&-1_0-j_3\cr\hline
\end{array}$$
\caption{\label{F4multtable}The multiplication table of the Lie algebra of type $F_4$}
\end{table}


In $3^3{:}\SL_3(3)$, the pointwise stabilier of $W_0$ is an elementary abelian group
of order $3^4$, generated by 
$$d,d^a,c^{a^{-1}}cc^a, (c^{a^{-2}}c^{a^5}c^{a^{-2}}c)^2b.$$
We may re-compute the action of $c^{a^{-1}}cc^a$ in $F_4$, as follows:
\begin{eqnarray*}
q_0&\mapsto& q_0,\cr
q_1&\mapsto& -(qk\ombar)_1,\cr
q_2&\mapsto& (qj\omega)_T,\cr
q_3&\mapsto& -(qi\ombar)_3,\cr
q_4&\mapsto& (q^*i\omega)_7,\cr
q_5&\mapsto& (qi\omega)_4,\cr
q_6&\mapsto& (qk\omega)_X,\cr
q_7&\mapsto& -(q^*j)_5,\cr
q_8&\mapsto& -(q^*i\ombar)_2,\cr
q_9&\mapsto& -(qj\ombar)_9,\cr
q_X&\mapsto& (q^*j\ombar)_E,\cr
q_E&\mapsto& -(q^*k\omega)_6,\cr
q_T&\mapsto& (q^*k)_8.
\end{eqnarray*}
The final element $(c^{a^{-2}}c^{a^5}c^{a^{-2}}c)^2b$ 
acts
as follows: 
\begin{eqnarray*}
q_0&\mapsto& q_0,\cr
q_1&\mapsto& (q\omega)_3,\cr
q_2&\mapsto& (qj)_T,\cr
q_3&\mapsto& -(qi)_9,\cr
q_4&\mapsto& (q\ombar)_4,\cr
q_5&\mapsto& -(qk\ombar)_5,\cr
q_6&\mapsto& (q^*\ombar)_E,\cr
q_7&\mapsto&(qk\omega)_7,\cr
q_8&\mapsto& (q^*k\omega)_2,\cr
q_9&\mapsto& (qi\ombar)_1,\cr
q_X&\mapsto&-(qj\omega)_6,\cr
q_E&\mapsto& (q^*i\omega)_X,\cr
q_T&\mapsto& (q^*j\omega)_8.
\end{eqnarray*}
We can find the common `eigenvectors' of this $3^4$
(where, again, scalar multiplication is defined by an element of $D$). 
These are the images under $D$ and $a$ of
\begin{eqnarray*}
\pm 1_0&\leftrightarrow&\langle i_1+\ombar j_3+ k_9, \omega i_1 + j_3+\omega k_9\rangle,\cr
\pm i_0&\leftrightarrow&\langle 1_T -\ombar j_2 + k_8, \omega_T -j_2+\ombar k_8\rangle,\cr
\pm j_0&\leftrightarrow&\langle 1_X+\omega i_E - \omega k_6, \omega_X+ i_E - \ombar k_6\rangle,\cr
\pm k_0&\leftrightarrow&\langle 1_4- i_5+\ombar j_7, \omega_4-\omega i_5+\omega j_7\rangle.
\end{eqnarray*}
The other eigenvectors can be found by applying elements of
the stabilizer of $W_0$.

More specifically, we can use the part of the Weyl group of $F_4$ that lies inside
$3^3{:}\SL_3(3)$. This is a group $(3 \times 2\udot A_4){:}2$, which is the
centralizer of an involution in $3^3{:}\SL_3(3)$. If we take the involution
$c^{a^{-1}}$, then it is centralized by $d^{a^{-1}}$ in the normal $3^3$, and by
$c^{a^{-2}}$ and $(bc^{a^{-1}}b)^2$, generating $2S_4$, inside $\SL_3(3)$.
For convenience we exhibit the element $(bc^{a^{-1}}b)^2$ explicitly:
\begin{eqnarray*}
&q_0\mapsto q\omega_0,&\cr
q_1\mapsto q_1,&q_3\mapsto qk\omega_3,&q_9\mapsto-qj\ombar_9,\cr
q_2\mapsto qj_X,&q_X\mapsto q\ombar_4,& q_4\mapsto -qi\omega_2,\cr
q_5\mapsto q^*j_8,&q_8\mapsto q^*\omega_6,& q_6\mapsto qi\ombar_5,\cr
q_7\mapsto q^*k_T,&q_T\mapsto q^*i_E,& q_E\mapsto -qk_7.
\end{eqnarray*}

If $\varepsilon$ is a scalar of order $3$, we may pick a root vector
$$e_1=i_1+\ombar j_3 + k_9 - \varepsilon(\omega i_1+j_3+\omega k_9),$$
where the subscript denotes the corresponding root in $W_0$. Then we can apply
elements of the above group $(3 \times 2\udot A_4){:}2$ to get the remaining
(long) root vectors. We have
\begin{eqnarray*}
e_i&=& -\omega_T+j_2-\ombar k_8 -\varepsilon(-\ombar_T+\omega j_2-\omega k_8),\cr
e_j&=&-\ombar_X-\ombar i_E+k_6-\varepsilon(-1X-\omega i_E+\omega k_6),\cr
e_k&=&-1_4+i_5-\ombar j_7 -\varepsilon(-\omega_4+\omega i_5-\omega j_7),
\end{eqnarray*}
and the corresponding negative roots are obtained by swapping the coefficients
$1$ and $-\varepsilon$ of the two halves of the vector.
Left-multiples by $\omega$ and $\ombar$ are easily obtained by applying
the element $d^{a^{-1}}$.

\section{Reducing modulo $p$}
If $p$ is any prime other than $2$ or $3$, then the entire calculation goes through
if we replace $\mathbb R$ by $\mathbb F_p$,
or indeed by any field $\mathbb F$ of characteristic $p$. 
In particular, Theorem~\ref{producttheorem}
and Proposition~\ref{Jacobi} hold in this more general setting. Throughout,
$\mathbb C$ is replaced by a $2$-dimensional space over $\mathbb F$, with
basis $\{1,\omega\}$. (One must be careful to distinguish between $\omega$ and an
element of order $3$ in $\mathbb F$, if there is one.)

In characteristic $3$, however, the whole strategy fails for many reasons: $G$ has no
faithful irreducible representations in characteristic $3$, a $2$-dimensional space does
not support a fixed-point-free linear map $\omega$ of order $3$, and the definition
of $c$ requires dividing  by $3$, to give just a few examples.

In characteristic $2$, 
the proof of Lemma~\ref{unique} fails, but the construction of the Lie algebra goes through.
Restricting to $F_4$ one finds that the group $3^3{:}\SL_3(3)$ is no longer irreducible
in characteristic $2$, but has two constituents, each of degree $26$. This is reflected
in the fact that
the Lie algebra of type $F_4$ is no longer simple, but
contains an ideal of dimension $26$. This ideal can be
seen using the ($2$-sided) ideal $\langle 1+i\rangle$ in $\mathbb Z[i,\omega]$.
The latter ideal is spanned additively by the long roots of the $F_4$ root system, and
modulo $2\mathbb Z[i,\omega]$ contains just three non-zero cosets, containing
respectively $i+j$, $j+k$ and $k+i$. Writing 
\begin{eqnarray*}
a_t&=&i_t+j_t,\cr
b_t&=&j_t+k_t,\cr 
c_t&=&k_t+i_t,
\end{eqnarray*}
 we have the following multiplication table for
the ideal:
$$\begin{array}{c|ccc|}
&a_1&b_1&c_1\cr\hline
a_0&c_3+c_9&a_3+a_9&b_3+b_9\cr
b_0&a_3+b_9&b_3+c_9&c_3+a_9\cr
c_0&b_3+a_9&c_3+b_9&a_3+c_9\cr\hline
\end{array}$$
Apart from the notation, this is the same as the multiplication constructed in
\cite{bigRee} for the exceptional Jordan algebra in characteristic $2$.
(This is the only characteristic in which the exceptional Jordan algebra
is also a Lie algebra.)

\end{document}